\numberwithin{equation}{section}
\theoremstyle{plain}
\newtheorem{theorem}{Theorem}[section]
\newtheorem{proposition}[theorem]{Proposition}
\newtheorem{lemma}[theorem]{Lemma}
\newtheorem{corollary}[theorem]{Corollary}
\theoremstyle{definition}
\newtheorem{remark}[theorem]{Remark}
\newtheorem{example}[theorem]{Example}
\newcommand{\Z}{\mathbb{Z}}
\newcommand{\N}{\mathbb{N}}
\newcommand{\Q}{\mathbb{Q}}
\newcommand{\R}{\mathbb{R}}
\newcommand{\mathscr}{}
\newcommand{\cB}{\mathcal{B}}
\newcommand{\GW}{\mathrm{GW}}
\newcommand{\Ham}{\mathrm{Ham}}
\begin{document}
\title[Gromov Width of graph associahedra]{Gromov Width of symplectic toric manifolds associated with graphs}

\author[S. Choi]{Suyoung Choi}
\address{Department of Mathematics, Ajou University, 206 Worldcup-ro, Suwon 16499, South Korea}
\email{schoi@ajou.ac.kr}

\author[T. Hwang]{Taekgyu Hwang}
\address{Department of Mathematics, Ajou University, 206 Worldcup-ro, Suwon 16499, South Korea}
\email{hwangtaekkyu@gmail.com}

\thanks{The authors were supported by the National Research Foundation of Korea Grant funded by the Korean Government (NRF-2019R1A2C2010989).}

\date{\today}
\maketitle

\begin{abstract}
	We give an explicit formula for the Gromov width for a class of symplectic toric manifolds constructed from simple graphs.
    As a corollary, we show a version of non-squeezing theorem with respect to the inclusion of connected graphs.
\end{abstract}

\section{Introduction}
A $2n$-dimensional symplectic manifold $(M, \omega)$ is called \emph{toric} if it admits an effective Hamiltonian action of the $n$-dimensional torus~$T^n$.
It is known due to Delzant~\cite{Delzant1988} that symplectic toric manifolds are classified by their image under the moment map $\mu \colon M \to \mathrm{Lie}(T^n)^\ast \cong \R^n$.
The image turns out to be a convex polytope $P$ satisfying the following property: at each vertex~$p$ of~$P$, the primitive outward normal vectors of the facets containing~$p$ form a basis for $\Z^n$. A polytope satisfying this property is called a \emph{Delzant polytope}.

Feichtner--Sturmfels~\cite{Feichtner-Sturmfels2005}, and Postnikov~\cite{Postnikov2009} independently, introduced an interesting family of Delzant polytopes called nestohedra.
A \emph{building set} $\cB$ on a finite set $[n+1] := \{1, 2, \ldots, n+1\}$ is a collection of nonempty subsets of $[n+1]$ such that
\begin{itemize}
  \item $\cB$ contains all singletons $\{i\}$ for $i \in [n+1]$, and
  \item if $I, J \in \cB$ and $I \cap J \neq \emptyset$, then $I \cup J \in \cB$.
\end{itemize}
For $I \subset [n+1]$, let $\Delta_I$ be the simplex given by the convex hull of points~$e_i$, $i \in I$, where $e_i$ is the $i$th coordinate vector of $\R^{n+1}$.
Then the \emph{nestohedron}~$P_\cB$ is defined to be the Minkowski sum of simplices
\begin{equation}\label{eq:Minkowski_sum}
    P_\cB = \sum_{I \in \cB} \Delta_I \subset \R^{n+1}.
\end{equation}
In the case when $[n+1] \in \cB$, the polytope~$P_\cB$ has maximal dimension~$n$ and we regard~$P_\cB$ as a Delzant polytope in~$\mathbb{R}^n$ (see for example \cite[Proposition~7.10]{Postnikov2009}).

A building set can be constructed from a graph as follows (see \cite{CD2006, ToledanoLaredo2008}).
Let $G$ be a simple graph with the vertex set $[n+1]$. Then
\begin{equation}\label{eq:graph_building_set}
	\cB(G) := \{ I \subset [n+1] \mid \text{the subgraph $G|_I$ induced by $I$ is connected}\}
\end{equation}
is a building set on $[n+1]$, and the corresponding nestohedron $P_{\cB(G)}$ is called a \emph{graph associahedron}. The corresponding symplectic toric manifold is the main object of our study.

Symplectic geometry is very flexible in general. For instance, any smooth function on a symplectic manifold induces a family of symplectomorphisms by the flow of the Hamiltonian vector field. On the other hand, the rigidity is somewhat hidden and harder to detect. It was Gromov~\cite{Gromov1985} who first found a symplectic invariant which measures the ``width'', hence fundamentally different from the volume. His non-squeezing theorem asserts that the ball~$B^{2n}(r)$ of radius~$r$ can be symplectically embedded into the product~$B^2(R) \times \R^{2n-2}$ if and only if $r \leq R$. This motivates the following definition of the \emph{Gromov width} of a symplectic manifold $(M^{2n}, \omega)$:
\begin{equation}
    w_G (M, \omega) := \sup \{ \pi r^2 \mid \text{$B^{2n}(r)$ symplectically embeds into $M^{2n}$} \}.
\end{equation}

While the definition is simple, we do not have a general formula for the Gromov width even if we restrict our attention to symplectic toric manifolds. There are some cases where we can compute the exact value from the Delzant polytope. We list some of them in the following. The only toric manifold of dimension $2$ is $\mathbb{P}^1$ and the Gromov width is given by the area. In the case when the dimension is~$4$, there is an algorithm using Cremona transformations; see Karshon--Kessler~\cite[Section~6]{Karshon-Kessler2017}. We also have a formula when the Delzant polytope is combinatorially equivalent to the product of simplices~\cite{Hwang-Lee-Suh2019}. The computation is more tractable in the Fano case due to a result by Lu \cite[Theorem~1.2]{Lu2006}, but there are Fano examples for which we do not know the exact value; see for example \cite[Example~5.7]{Hwang-Lee-Suh2019}.

In this paper, we provide a formula for the symplectic manifold determined by a graph associahedron~$P_{\cB(G)}$ in terms of a graph theoretic invariant of~$G$. The polytope~$P_{\cB(G)}$ turns out to have parallel facets whose normal vector induces a semifree circle action. This property enables us to find an upper bound for the Gromov width given by the distance between parallel facets; see Proposition~\ref{prop:key}. Moreover, as we will see from Corollary~\ref{cor:stabilization}, this upper bound remains the same even if we take the product with Euclidean spaces. Combining Proposition~\ref{prop:key} with some graph theoretic computations (see Section~\ref{sec:graph}), we obtain the following theorem.

\begin{theorem}\label{thm:main}
    Let $G$ be a simple graph with the vertex set $[n+1]$. Let $(M_G, \omega_G)$ denote the symplectic toric manifold determined by the graph associahedron~$P_{\cB(G)}$.
    Then the Gromov width is given by
	\begin{equation}\label{eq:main}
		w_G (M_G, \omega_G) = \min \{ k_i>1 \mid i \in [n+1] \} - 1,
	\end{equation}
    where $k_i:= k_i(G)$ is the number of the connected induced subgraphs of $G$ containing the vertex~$i$. The minimum is taken over all vertices~$i$ with $k_i>1$; if $k_i = 1$ for all $i$, then $M_G$ is a point and both sides of~\eqref{eq:main} are zeros by convention.
\end{theorem}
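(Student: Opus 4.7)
The plan is to prove matching upper and lower bounds for $w_G(M_G,\omega_G)$. The upper bound will come from Proposition~\ref{prop:key} applied to pairs of parallel facets of $P_{\cB(G)}$, and the lower bound from an equivariant embedding of a standard symplectic ball at a suitably chosen fixed point of $M_G$.

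\textbf{Upper bound.} Fix $i \in [n+1]$ that is neither isolated nor a cut vertex of $G$. Then both $\{i\}$ and $[n+1]\setminus\{i\}$ belong to $\cB(G)$, so $P_{\cB(G)}$ has two parallel facets with primitive integer normal $\pm e_i$: the facet $F_{\{i\}}$ on which $x_i$ is minimized and the facet $F_{[n+1]\setminus\{i\}}$ on which $x_i$ is maximized. By additivity of support functions under Minkowski sums, the extreme values are
\begin{equation*}
    \min_{P_{\cB(G)}} x_i = 1 \quad\text{and}\quad \max_{P_{\cB(G)}} x_i = \sum_{J \in \cB(G)} \max_{\Delta_J} x_i = |\{J \in \cB(G) : i \in J\}| = k_i.
\end{equation*}
A vertex-by-vertex inspection of $P_{\cB(G)}$ then confirms that the circle action on $M_G$ generated by $e_i$ is semifree, so Proposition~\ref{prop:key} yields $w_G(M_G,\omega_G) \le k_i - 1$. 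What remains on the upper-bound side is a purely combinatorial lemma: for every cut vertex $i$ of $G$ there is a non-cut vertex $j$ with $k_j \le k_i$, so that $\min\{k_i - 1 \mid k_i > 1\}$ is attained by some non-cut non-isolated vertex. I plan to prove this in Section~\ref{sec:graph} via the block decomposition at $i$: $k_i$ factors as a product of local tube counts over the blocks incident to $i$, while $k_j$ for $j$ in one of these blocks is a sum-of-products from which the inequality drops out.

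\textbf{Lower bound.} Set $m := \min\{k_i - 1 \mid k_i > 1\}$. At each fixed point of $M_G$ there is an equivariant Darboux chart identifying a neighborhood with an open subset of $\C^n$, and a standard equivariant symplectic ball of capacity $c$ embeds whenever each of the $n$ lattice edge lengths at the corresponding vertex of $P_{\cB(G)}$ is at least $c$. I therefore aim to exhibit a vertex $v_*$ of $P_{\cB(G)}$ at which every edge has lattice length at least $m$. Let $i_*$ attain $k_{i_*} - 1 = m$; my plan is to build $v_*$ by specifying a maximal nested set (equivalently, a maximal tubing) that begins with $\{i_*\}$ and grows greedily outward through the connected induced subgraphs of $G$, controlling the edge lengths at $v_*$ via $k$-type invariants of intermediate subgraphs.

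\textbf{Main obstacle.} The hardest step is the lower bound: arranging the nested set so that \emph{all} $n$ edge lengths at $v_*$ are simultaneously at least $m$. This will draw on the same graph-theoretic machinery that drives the cut-vertex reduction in the upper bound, plus an inductive design of the tubing spreading outward from $i_*$. The upper bound itself reduces to a short support-function calculation, a semifree-action check, and the block-decomposition lemma.
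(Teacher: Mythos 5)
Your upper-bound outline is essentially the paper's route (your support-function computation identifies the same pair of parallel supporting hyperplanes $x_i\ge 1$, $x_i\le k_i$; your ``non-cut vertex attains the minimum'' lemma is Lemma~\ref{lem:parallel_facets}; and the semifreeness you dispose of by ``vertex-by-vertex inspection'' is precisely the content of Lemma~\ref{lem:edge_property}, which does require a genuine argument about which facets $F_{I_1},\dots,F_{I_{n-1}}$ can intersect in an edge, not just an inspection). The fatal problem is the lower bound. Your criterion --- a ball of capacity $c$ embeds if all $n$ lattice edge lengths at some vertex of the moment polytope are at least $c$ --- is correct, but the vertex $v_*$ you plan to construct does not exist in general, no matter how the nested set is chosen. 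Take $G=K_{n+1}$: then $k_i=2^n$ for every $i$, so $m=2^n-1$, while $P_{\cB(K_{n+1})}$ is the permutohedron with vertices the permutations of $(1,2,4,\dots,2^n)$. At \emph{every} vertex the $n$ edge vectors are $2^{r-1}(e_s-e_t)$, $r=1,\dots,n$, so every vertex has an edge of lattice length $1$; already for $K_3$ (the hexagon with vertices the permutations of $(1,2,4)$) the largest vertex simplex has size $1$ whereas the Gromov width is $3$. So vertex-centered equivariant balls can never reach the claimed bound, and no amount of graph-theoretic control over the tubing fixes this. The missing idea is to abandon the vertex simplex and use the Latschev--McDuff--Schlenk criterion (Proposition~\ref{prop:lower_bound}), which only asks for $n$ segments of affine length $k_{n+1}-1$ in lattice-basis directions meeting at a single, possibly interior, point. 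The paper's Lemma~\ref{lem:lower_bound} places such a cross through the interior point $(a,\dots,a)$ with $a=(|\cB|-k_{n+1}-1)/(n-1)$, and the containment in $P_G$ is what forces the combinatorial work: the inequality $n\,k_i(G)\ge|\cB|-1$ (Lemma~\ref{lem:inequality}) and the monotonicity of $I\mapsto(|\cB|_I|-1)/(|I|-1)$ along nested building-set elements.

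Two further gaps on your upper-bound side. First, since Lemma~\ref{lem:key} needs the extremal fixed submanifolds to have codimension two, you do need both $\{i\}$ and $[n+1]\setminus\{i\}$ to be facets, hence your block-decomposition lemma (that the minimum of $k_i$ over $k_i>1$ is attained at a non-cut, non-isolated vertex) must actually be proved; the paper does this by a direct counting argument comparing $k_{n+1}$ with $k_1$ across two components of $G|_{[n]}$, and your product-over-branches formula for $k_i$ at a cut vertex is a plausible but unexecuted substitute. Second, you never treat disconnected $G$, where $[n+1]\notin\cB(G)$, the polytope is a product of the graph associahedra of the components, and the formula $\min\{k_i>1\}-1$ has to be matched against the product structure (this is the paper's final subsection, using the stabilization statement of Corollary~\ref{cor:stabilization} for the upper bound on the product).
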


The class of graph associahedra includes some important families of simple polytopes, such as permutohedra, associahedra (or Stasheff polytopes, which was first introduced in homotopy theory \cite{Stasheff1963}), cyclohedra (or Bott-Taubes polytopes) and stellohedra, corresponding to the complete graphs, the path graphs, the circle graphs, and the star graphs, respectively. We write down the explicit values of the Gromov width in such cases.
\begin{example}
    Let $K_{n+1}, P_{n+1}, C_{n+1}$ and $K_{1,n}$ be the complete graph, the path graph, the circle graph, and the star graph with $n+1$ vertices, respectively.
    Then
    \begin{alignat*}{2}
        & w_G (M_{K_{n+1}}, \omega_{K_{n+1}}) &&= 2^n - 1, \\
        & w_G (M_{P_{n+1}}, \omega_{P_{n+1}}) &&= n, \\
        & w_G (M_{C_{n+1}}, \omega_{C_{n+1}}) &&= \frac{n(n+1)}{2}, \quad \text{ and }\\
        & w_G (M_{K_{1,n}}, \omega_{K_{1,n}}) &&= 2^{n - 1}.
    \end{alignat*}
\end{example}

\begin{corollary}\label{cor:subgraph}
	Let $G$ be a connected simple graph. If $H$ is a subgraph of~$G$, we have $w_G(M_H, \omega_H) \leq w_G(M_G, \omega_G)$. In the case when $H$ has fewer vertices than~$G$, the inequality is strict.
\end{corollary}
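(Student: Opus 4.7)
The plan is to invoke Theorem~\ref{thm:main} and reduce the statement to a purely combinatorial inequality. Writing
\[
  f(K):=\min\{k_i(K):i\in V(K),\ k_i(K)>1\}
\]
(with $f(K):=1$ if the indicated set is empty), Theorem~\ref{thm:main} identifies $w_G(M_K,\omega_K)=f(K)-1$, so the corollary becomes
\[
  f(H)\le f(G),\qquad\text{with strict inequality when }|V(H)|<|V(G)|.
\]

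The starting point is a pointwise monotonicity: for every $i\in V(H)$, the identity $I\mapsto I$ embeds connected induced subgraphs of $H$ containing $i$ into those of $G$ containing $i$. It is well defined because $V(H)\subseteq V(G)$, and injective because $E(H)\subseteq E(G)$ forces $G|_I$ to be connected whenever $H|_I$ is. Hence $k_i(H)\le k_i(G)$ for all $i\in V(H)$. In the sub-vertex case $V(H)\subsetneq V(G)$, connectedness of $G$ strengthens this: for any $v_0\in V(G)\setminus V(H)$ and any $i\in V(H)$, a $G$-path from $i$ to $v_0$ exhibits a connected induced subgraph of $G$ containing $i$ whose vertex set is not contained in $V(H)$, and hence lies outside the image of the injection. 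This yields the sharper bound $k_i(H)\le k_i(G)-1$.

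To pass from pointwise bounds to the comparison of minima, I would pick $i^*\in V(G)$ attaining $f(G)$ and split into cases. In the easy case where $i^*\in V(H)$ and $i^*$ has an $H$-neighbor, the pointwise inequality at $i^*$ gives $f(H)\le k_{i^*}(H)\le k_{i^*}(G)=f(G)$, strict in the sub-vertex setting by the sharpened bound. In the remaining cases---where $i^*$ lies outside $V(H)$, or lies in $V(H)$ but is isolated in $H$---I would transfer the estimate along a shortest $G$-path from $i^*$ into $V(H)\cap S_H$, using connectedness of $G$ to guarantee such a target exists, and relying on the leaf-extension identity $k_{i^*}(G)=1+k_w(G-i^*)$ whenever $i^*$ is pendant to relate $f(G)$ to the count at a vertex accessible to $H$.

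The main obstacle is precisely this last case analysis: when the minimiser of $k_i(G)$ is not itself a vertex of $V(H)\cap S_H$, routing the bound to a neighboring vertex of $H$ is delicate and is the only step where connectedness of $G$ is genuinely used. The strict inequality for $|V(H)|<|V(G)|$ then falls out automatically, by applying the sharpened pointwise bound to the witness $j\in V(H)\cap S_H$ produced by the routing argument.
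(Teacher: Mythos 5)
Your reduction to the combinatorial inequality $f(H)\le f(G)$, the pointwise bound $k_i(H)\le k_i(G)$ for $i\in V(H)$, the sharpened bound $k_i(H)\le k_i(G)-1$ when $V(H)\subsetneq V(G)$ (which correctly uses connectedness of $G$), and the easy case in which some minimiser $i^*$ of $k_{\cdot}(G)$ lies in $V(H)$ and is non-isolated there are all sound. The difficulty is that the remaining case is not actually proved: you label it ``the main obstacle,'' propose to ``transfer the estimate along a shortest $G$-path'' into the support of $H$, and invoke the identity $k_{i^*}(G)=1+k_w(G-i^*)$ for pendant $i^*$, but none of this is carried out, and as stated it cannot work in general. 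The minimiser $i^*$ need not be pendant (cycles and complete graphs have no pendant vertices at all), and even when it is, the identity only reduces the question to comparing $k_w(G-i^*)$ with counts inside $H$, i.e.\ to the same problem on a smaller graph, with no induction set up. The case is genuinely needed: for $G$ the path on vertices $1,2,3,4,5$ and $H$ the single edge $\{3,4\}$, the minimisers of $k_{\cdot}(G)$ are the endpoints $1$ and $5$, neither of which is a vertex of $H$. Since you also make the strict inequality for $|V(H)|<|V(G)|$ depend on ``the witness produced by the routing argument,'' that part is left open as well.

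What the paper does at exactly this point, and what your sketch is missing, is a direct comparison that never tries to transport the value $f(G)$ from $i^*$ to another vertex of $G$: for a problematic vertex $j$ (a minimiser outside the support of $H$), choose a path $P$ in $G$ from $j$ to a vertex $m$ of $H$ which meets $H$ only at $m$ (no edge of $P$ lies in $H$, and one may take $P$ shortest so that it hits the relevant vertices of $H$ only in its endpoint). Then $S\mapsto S\cup V(P)$ sends the connected induced subgraphs of $H$ containing $m$ injectively into the connected induced subgraphs of $G$ containing $j$, and the image misses $\{j\}$; hence $k_m(H)<k_j(G)=f(G)$, with $k_m(H)>1$ when $m$ is chosen non-isolated in $H$. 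This single injection yields both the inequality and its strictness, with connectedness of $G$ entering only through the existence of $P$. If you finish your write-up, this is the step to supply: the shortest-path idea you mention is the right starting point, but it must be turned into this explicit injection rather than into a pendant-vertex recursion.
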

\begin{proof}
    By the definition of~$k_i$, we have $k_i(H) \leq k_i(G)$ for each vertex~$i$ of~$H$.
    If there exists a vertex $j$ of $G$ not in $H$, we can find a vertex~$m$ of~$H$ connected to~$j$ by a path in $G$ such that no edge is in $H$.
    In this case, we have $k_m(H) < k_j(G)$.
\end{proof}

The Gromov width of closed symplectic manifolds is not very useful as an obstruction to symplectic embeddings because a topological embedding of a closed manifold into a connected manifold of the same dimension is necessarily a homeomorphism. Motivated by the works on stabilized symplectic embeddings of ellipsoids (see for example, \cite{Cristofaro-Hind2018, Crisofaro-Hind-McDuff2018, Hind-Kerman2014, McDuff2018, Siegel2019}), we can still think of embeddings after taking products with Euclidean spaces. As the Gromov width is unchanged under the stabilization by Corollary~\ref{cor:stabilization} (a precise argument will be given in Section~\ref{sec:graph}), we obtain  the following analog of the non-squeezing theorem for symplectic toric manifolds obtained from graphs.

\begin{corollary}\label{cor:embedding}
    Let $G$ be a connected simple graph.
    Assume that $H$ is a connected subgraph of $G$ and $|H| = |G|-k$ with $k>0$, where $|G|$ denotes the number of vertices of~$G$.
    Then an embedding
    \begin{equation}\label{eq:squeeze}
    M_G \times \R^{2m} \hookrightarrow M_H \times \R^{2k+2m}
    \end{equation}
    can never be symplectic for any $m \geq 0$.
\end{corollary}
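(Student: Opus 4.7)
The plan is a short argument by contradiction assembling three facts that are by now available: monotonicity of the Gromov width under symplectic embeddings between symplectic manifolds of equal dimension, the stabilization invariance $w_G(M \times \R^{2j}) = w_G(M)$ supplied by Corollary~\ref{cor:stabilization}, and the strict subgraph inequality $w_G(M_H, \omega_H) < w_G(M_G, \omega_G)$ from Corollary~\ref{cor:subgraph} (which applies because $H$ is connected with strictly fewer vertices than $G$, since $k>0$).

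Concretely, I would suppose toward contradiction that a symplectic embedding $\phi \colon M_G \times \R^{2m} \hookrightarrow M_H \times \R^{2k+2m}$ exists. Since $|H| = |G|-k$, both sides have real dimension $2(|G|-1+m)$, so $\phi$ is an embedding between equidimensional symplectic manifolds. Given any symplectic embedding of a ball $B^{2N}(r)$ into $M_G \times \R^{2m}$, composition with $\phi$ yields a symplectic embedding of the same ball into $M_H \times \R^{2k+2m}$; passing to the supremum over $r$ gives
\[
w_G(M_G \times \R^{2m}) \leq w_G(M_H \times \R^{2k+2m}).
\]
Two applications of Corollary~\ref{cor:stabilization} collapse this inequality to $w_G(M_G,\omega_G) \leq w_G(M_H,\omega_H)$, which directly contradicts Corollary~\ref{cor:subgraph}.

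I do not expect a serious obstacle. The one point worth double-checking is that Corollary~\ref{cor:stabilization} really yields an equality and not just an inequality: the bound $w_G(M \times \R^{2j}) \geq w_G(M)$ is immediate, because any symplectic ball embedding into $M$ stabilizes trivially by taking the identity on the Euclidean factor, while the nontrivial direction $\leq$ is precisely the content of the semifree-circle-action upper bound flagged in the introduction (Proposition~\ref{prop:key}), which by design persists under stabilization. Once that equality is in hand, the corollary follows in essentially one line.
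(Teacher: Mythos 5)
Your argument is correct and is essentially the paper's own proof: monotonicity of the Gromov width under the hypothetical embedding, the stabilization equality $w_G(M\times\R^{2j})=w_G(M)$ (upper bound from Corollary~\ref{cor:stabilization}, lower bound from the trivial stabilization $B^{2n+2j}(r)\hookrightarrow B^{2n}(r)\times\R^{2j}$), and the strict inequality of Corollary~\ref{cor:subgraph}. The only point to state explicitly is why Corollary~\ref{cor:stabilization} collapses to $w_G(M_G\times\R^{2m})\leq w_G(M_G)$ (and similarly for $M_H$): the proof of Theorem~\ref{thm:restate_main} shows that the Gromov width of $M_G$ is exactly $H_{\max}-H_{\min}$ for the semifree circle action induced by $u=(1,\dots,1)$, whose extremal fixed submanifolds have codimension two, so the stabilized upper bound coincides with $w_G(M_G)$ itself.
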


\begin{remark}
	In general, the Gromov width is not preserved under the stabilization as we see from the result of Lalonde~\cite{Lalonde1994} asserting that
	\[
		w_G(\Sigma_g \times \mathbb{R}^2) = \infty \quad \text{for a Riemann surface $\Sigma_g$ with genus $g \geq 1$}.
	\]
	 Also, the authors do not know whether there exists a topological obstruction to the embedding~\eqref{eq:squeeze}.
\end{remark}

\section{Estimation of the Gromov width} \label{sec:gromov_width}
In this section, we collect some results on the estimation of the Gromov width that we need to prove Theorem~\ref{thm:main}. The key result is Lemma~\ref{lem:key} on the upper bound as the distance between two parallel facets. More details and references can be found in~\cite{Hwang-Lee-Suh2019} where a similar strategy is used.

\subsection{Lower bound}
Let $(M,\omega)$ be a symplectic toric manifold of dimension~$2n$.
We are given a moment map
$$
    \mu \colon M \to \mathfrak{t}^\ast \simeq \R^n
$$
where the weight lattice is identified with $\Z^n$ by the last map.
A lower bound for the Gromov width can be obtained by looking at the image $P$ of the moment map.
For $\rho >0$, let $\Diamond (\rho)$ be the convex hull of $n$ line segments $L_1, \ldots, L_n$ such that
\begin{itemize}
  \item $\cap_{i=1}^n L_i$ is a point.
  \item The primitive vectors parallel to $L_i$ form an integral basis.
  \item The affine length of $L_i$ is $\rho$, where the affine length is the ratio of the length compared to the parallel primitive vector.
\end{itemize}

\begin{proposition}[Latschev--McDuff--Schlenk~\cite{Latschev-McDuff-Schlenk2013}, Section~4.2, see also Mandini-Pabiniak~\cite{Mandini-Pabiniak2018}, Proposition~5] \label{prop:lower_bound}
    If $P$ contains $\Diamond(\rho)$, then the Gromov width of $(M, \omega)$ is at least $\rho$.
\end{proposition}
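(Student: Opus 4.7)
The plan is to reduce the problem, via action-angle coordinates, to a symplectic embedding of the ball into $\Diamond(\rho) \times T^n$, and then construct that embedding by a folding argument.

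First, I would normalize the setup: since the primitive vectors parallel to $L_1, \ldots, L_n$ form a $\Z$-basis, the corresponding element of $GL(n,\Z)$ induces a toric symplectomorphism of $(M,\omega)$ and a linear change of the moment polytope. Combined with a translation (a shift of $\mu$ by a constant), this arranges that the common point of the $L_i$ is the origin and that each $L_i$ lies along the $i$th coordinate axis as $L_i = [a_i e_i, b_i e_i]$ with $b_i - a_i = \rho$ and $a_i \leq 0 \leq b_i$. Neither operation changes the Gromov width. Since $0$ lies in $\operatorname{int}(\Diamond(\rho)) \subseteq \operatorname{int}(P)$, the standard action-angle identification for toric manifolds gives a symplectomorphism $\mu^{-1}(\operatorname{int}(P)) \cong \operatorname{int}(P) \times T^n$ carrying $\omega$ to $\sum dx_i \wedge d\theta_i$. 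It then suffices to exhibit a symplectic embedding $B^{2n}(r) \hookrightarrow \operatorname{int}(\Diamond(\rho)) \times T^n$ for each $r$ with $\pi r^2 < \rho$.

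Passing $B^{2n}(r)$ through the symplectic polar coordinates $z_j = \sqrt{y_j/\pi}\, e^{2\pi i \theta_j}$ identifies it, away from a measure-zero set, with $\Delta^\circ(\pi r^2) \times T^n$, where $\Delta^\circ(c) = \{y \in \R^n : y_j > 0,\ \sum y_j < c\}$. The reduced task is to symplectically embed $\Delta^\circ(\pi r^2) \times T^n$ into $\operatorname{int}(\Diamond(\rho)) \times T^n$. No translation of $\Delta^\circ(\pi r^2)$ lies in $\Diamond(\rho)$ once $\pi r^2$ is close enough to $\rho$ --- the two regions already have the same volume $\rho^n/n!$ when $\pi r^2 = \rho$ --- so the construction must be nonlinear and must redistribute parts of the simplex among the $2^n$ orthants cut out by the coordinate hyperplanes.

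The hard part will be producing this nonlinear embedding. My approach would be a symplectic folding argument: partition $\Delta^\circ(\pi r^2)$ along slabs parallel to the coordinate hyperplanes, translate each slab into a distinct orthant of $\Diamond(\rho)$, and on the transition regions twist the $T^n$ factor by an explicit Hamiltonian isotopy so that the composite map preserves $\sum dx_i \wedge d\theta_i$. The assumption that the directions of the $L_i$'s form a $\Z$-basis is exactly what lets the $T^n$-twists across coordinate hyperplanes be chosen consistently (i.e., descend to the quotient torus). Letting $\pi r^2 \nearrow \rho$ then yields $w_G(M,\omega) \geq \rho$.
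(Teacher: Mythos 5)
You should note first that the paper itself does not prove this proposition: it is quoted from Latschev--McDuff--Schlenk (Section~4.2) and Mandini--Pabiniak (Proposition~5), so a blind proof has to reconstruct their embedding lemma. Your first paragraph does reproduce the standard reduction: normalize by an element of $GL(n,\Z)$ and a translation so that the $L_i$ lie along the coordinate axes, and use action-angle coordinates $\mu^{-1}(\operatorname{int}P)\cong \operatorname{int}P\times T^n$, so that it suffices to embed $B^{2n}(r)$ with $\pi r^2<\rho$ into $\operatorname{int}\Diamond(\rho)\times T^n$. (Minor point: the crossing point need not lie in $\operatorname{int}\Diamond(\rho)$ --- it may be an endpoint of every $L_i$, in which case $\Diamond(\rho)$ is a simplex --- but this does not affect the reduction.) Everything after that reduction, however, is either not valid or not supplied, and that is where the entire content of the proposition lies.

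First, replacing $B^{2n}(r)$ by $\Delta^\circ(\pi r^2)\times T^n$ ``away from a measure-zero set'' is not a legitimate reduction: an embedding of $\Delta^\circ(\pi r^2)\times T^n$ into $\operatorname{int}\Diamond(\rho)\times T^n$ only embeds the ball minus the coordinate hyperplanes $\{z_j=0\}$, which is not a ball, and the Gromov width requires an honest ball; already for $n=1$ (open disk into open annulus of equal area) the naive action-angle map does not extend over the degeneracy locus, and handling that locus is precisely the content of the cited lemma. To recover a genuine ball you would need to know beforehand that $B^{2n}(c')$ embeds into $\Delta^\circ(c)\times T^n$ for $c'<c$, which is exactly the special case of the proposition in which the crossing point is a common endpoint of the $L_i$ --- so the reduction is circular. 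Second, the folding sketch cannot be accepted as the missing construction: since $\operatorname{vol}\bigl(\Diamond(\rho)\times T^n\bigr)=\rho^n/n!$ equals the volume of the ball of capacity $\rho$ (as you yourself observe), the required embedding is asymptotically volume-filling, leaving no slack for ``transition regions''; standard folding constructions lose a definite fraction of capacity, and you give no quantitative bookkeeping showing that the translated slabs together with the interpolating pieces fit inside $\Diamond(\rho)$. (Also, translating the action variables is already symplectic, so the issue is not ``twisting the $T^n$ factor'' to preserve the form but producing a single smooth injective symplectic map from the pieces --- that interpolation is where folding loses capacity --- and the $\Z$-basis hypothesis is what justifies your normalization step, not any ``descent'' of torus twists.) What Latschev--McDuff--Schlenk actually provide at this point is an explicit, essentially volume-filling embedding $B^{2n}(\rho-\epsilon)\hookrightarrow \operatorname{int}\Diamond(\rho)\times T^n$ for every $\epsilon>0$, in the spirit of Traynor's packing constructions, and Mandini--Pabiniak combine it with action-angle coordinates exactly as in your first paragraph; without reproducing such a construction, your argument establishes only the easy reduction, not the proposition.
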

\subsection{Upper bound}
Let $\iota \colon B^{2n}(r) \hookrightarrow (M, \omega)$ be a symplectic embedding. Gromov proved that $\pi r^2$ is bounded above by the area of a pseudoholomorphic curve passing through the point $\iota(0)$. In the following, we state the Gromov's theorem in terms of the Gromov--Witten invariants. Readers are referred to McDuff--Salamon~\cite[Chapter~7]{McDuff-Salamon2012} for the definitions and properties.
Given an integral homology class $A \in H_2(M, \Z)$ and $k$-tuples of cohomology classes $\alpha_i \in H^\ast(M, \Q)$, the genus zero Gromov--Witten invariant
 \[
 		\GW_{A,k}^M (\alpha_1, \alpha_2, \dots, \alpha_k) \in \Q
 \]
counts the number of $J$-holomorphic stable maps of genus zero in~$M$ which represents the class~$A$ and intersects with the cycles~$\alpha_i$. Let $[pt] \in H^\ast(M, \Q)$ denote the point class. We will use the notation~$\omega(A)$ to denote the symplectic area $\int_A \omega$.
\begin{theorem}[Gromov~\cite{Gromov1985}]\label{thm:Gromov}
	Suppose that $\GW_{A,k}^M ([pt], \alpha_2, \dots, \alpha_k) \neq 0$ for some nonzero $A \in H_2(M, \Z)$ and $\alpha_i \in H^\ast(M, \Q)$. Then the Gromov width of $(M, \omega)$ is at most $\omega(A)$.
\end{theorem}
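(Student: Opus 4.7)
The plan is to follow Gromov's original monotonicity argument combined with the standard interpretation of a non-vanishing Gromov--Witten invariant as guaranteeing the existence of a $J$-holomorphic stable map through a prescribed point. The key physical input is the classical fact that any pseudoholomorphic curve passing through the centre of a symplectically embedded ball of radius $r$ must have symplectic area at least $\pi r^2$; combined with the fact that the total $\omega$-area of a stable map of class $A$ is exactly $\omega(A)$, this gives the bound $\pi r^2\le \omega(A)$, from which the theorem follows by taking the supremum over $r$.

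In more detail, I would proceed as follows. Fix a symplectic embedding $\iota\colon B^{2n}(r)\hookrightarrow(M,\omega)$ and set $p:=\iota(0)$. Choose an $\omega$-tame almost complex structure $J$ on $M$ which, on the image $\iota(B^{2n}(r))$, agrees with $\iota_\ast J_{\mathrm{std}}$. Because the invariant $\GW_{A,k}^M([pt],\alpha_2,\dots,\alpha_k)$ is nonzero and independent of the chosen $J$, one can arrange that for a generic choice of point $p\in M$ and of cycles Poincar\'e dual to $\alpha_2,\dots,\alpha_k$ there exists a $J$-holomorphic genus-zero stable map $u\colon(\Sigma,\underline{z})\to M$ representing $A$ with $u(z_1)=p$. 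A small perturbation of the cycles representing the other $\alpha_i$'s allows us to take $p$ to be exactly $\iota(0)$ without destroying existence of such a curve.

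Let $C$ denote the irreducible component of $\Sigma$ carrying the marked point $z_1$, so that $u|_C$ is a non-constant $J$-holomorphic curve through $p=\iota(0)$. Pulling $u|_C$ back through $\iota^{-1}$ yields a non-constant $J_{\mathrm{std}}$-holomorphic curve in the standard ball through the origin (possibly only on the connected component of the preimage containing $z_1$, which is already enough). The monotonicity inequality for minimal surfaces, specialised to pseudoholomorphic curves (see McDuff--Salamon, Proposition~4.3.1), then gives
\begin{equation*}
\pi r^2 \le \int_{u|_C^{-1}(\iota(B^{2n}(r)))} u^\ast\omega \le \int_\Sigma u^\ast\omega = \omega(A),
\end{equation*}
where the last equality uses that $[u]=A$. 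Taking the supremum over all $r$ for which such an embedding exists yields $w_G(M,\omega)\le\omega(A)$.

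The main technical obstacle is handling bubbling: the $J$-holomorphic representative of $A$ produced by the non-vanishing of the Gromov--Witten invariant may only be a stable map, not a smooth curve, and one has to ensure that the particular component passing through $\iota(0)$ is non-constant and that its $\omega$-area is at most $\omega(A)$. The former is automatic from the definition of a stable map (constant components would have to carry enough marked or nodal points), and the latter follows since $\omega$ is non-negative on every $J$-holomorphic component. All the surrounding regularity and genericity issues are standard and are packaged into the definition of the Gromov--Witten invariants, so once these are taken as a black box the argument reduces to a single application of monotonicity.
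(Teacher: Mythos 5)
The paper does not prove this theorem---it is quoted from Gromov's paper---and your argument is exactly the standard one the paper alludes to (non-vanishing of the invariant forces a $J$-holomorphic stable map in class $A$ through $\iota(0)$ for a $J$ that is standard on the embedded ball, and the monotonicity inequality then gives $\pi r^2 \leq \omega(A)$), so your route agrees with the intended justification. One small correction: it is not automatic that the component carrying the marked point $z_1$ is non-constant (a constant genus-zero component with one marked point and two nodes is stable), so instead note that since $A \neq 0$ not all components are constant, and by connectedness of the image any chain of ghost components through $\iota(0)$ attaches to a non-constant component whose image also contains $\iota(0)$; apply monotonicity to that component.
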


The main tool we use to find a Gromov--Witten invariant as in Theorem~\ref{thm:Gromov} is given by the work of McDuff and Tolman~\cite{McDuff-Tolman_2006} on the Seidel representation~\cite{Seidel1997}. The \emph{Seidel morphism} is a group homomorphism
\begin{equation}\label{eq:Seidel_morphism}
	S \colon \pi_1(\Ham(M, \omega)) \to QH^0(M, \Lambda)^{\times}
\end{equation}
from the fundamental group of the Hamiltonian diffeomorphism group to the multiplicative group of the degree zero invertible elements in the (small) quantum cohomology ring. (We will use quantum cohomology rather than quantum homology. The cohomology version displays the degree computation more clearly; see \cite[Section~2.2]{McDuff-Tolman_2006}.) McDuff and Tolman developed methods to compute the image of~$S$ when the loop is represented by a Hamiltonian circle action.

In order to describe the result by McDuff and Tolman, we briefly review the definition of the quantum cohomology ring following \cite[Chapter~11]{McDuff-Salamon2012}.
Consider the Novikov ring
\begin{equation*}\label{eq:Novikov}
	\Lambda:= \Lambda^{\mathrm{univ}}[q,q^{-1}]
\end{equation*}
where $q$ is a variable of degree~$2$, and
\[
    \Lambda^{\mathrm{univ}}:= \left\{\sum_{i \in \N}
	r_i t^{\kappa_i} \;\Big|\; r_i \in \Q, \; \kappa_i \in \R, \; \lim_{i \to \infty} \kappa_i = \infty \right\}
\]
with $\deg t =0$.
The \emph{quantum cohomology ring} with coefficients in $\Lambda$ is an abelian group
\begin{equation*}
	QH^\ast(M, \Lambda):= H^\ast(M, \Q) \otimes_{\Q} \Lambda
\end{equation*}
together with the product $\ast$ defined as follows.
Given $A \in H_2(M, \Z)$, we will use the notation $c_1(A):= \langle c_1(M), A \rangle \in \Z$ for the first Chern number throughout the paper.
Let $a \in H^i(M, \Q)$ and $b \in H^j(M, \Q)$.
Then
\begin{equation}\label{eq:qp}
	a\ast b := \sum_{A \in H_2(M, \Z)} (a \ast b)_A \otimes q^{c_1(A)} t^{\omega(A)}
\end{equation}
where $(a \ast b)_A \in H^{i+j-2c_1(A)}(M, \Q)$ is defined uniquely by the condition
\begin{equation}\label{eq:GW}
	\int_X (a \ast b)_A \cup c = \GW_{A,3}^M (a,b,c)
\end{equation}
for all $c \in H^\ast(M, \Q)$.
This product extends linearly on $\Lambda$ and is called the \emph{quantum product}.
The quantum cohomology ring is an associative ring with unity $1 = [M]$.

Now we are ready to state a theorem by McDuff and Tolman. Let $(M, \omega)$ be a closed symplectic manifold equipped with a Hamiltonian $S^1$-action. We have a corresponding moment map
\[
	H \colon M \to \mathbb{R}.
\]
There is a choice of~$H$ up to constant addition and we will take the normalized one in the sense that $\int_M H \omega^n = 0$. Let $F_{\max}$ (respectively, $F_{\min}$) denote the fixed submanifold which is maximal (respectively, minimal) with respect to the moment map~$H$. We write
\begin{equation*}
	H_{\max}:= H(F_{\max}) \quad \text{and} \quad H_{\min}:= H(F_{\min})
\end{equation*}
for brevity. Given a fixed submanifold~$F$, the sum of the weights for the tangential representation at $x \in F$ does not depend on the choice of~$x$ and will be denoted by~$m(F)$.
\begin{theorem}[McDuff--Tolman~\cite{McDuff-Tolman_2006}, Theorem~1.10 and Lemma~3.10]\label{thm:MT}
	Let $u \in \pi_1(\Ham(M, \omega))$ be represented by a circle action with the normalized moment map $H \colon M \to \R$.
    Then
	\begin{enumerate}
		\item \label{MT1}
			The image of the Seidel morphism~\eqref{eq:Seidel_morphism} is given by
		\[
			S(u) = \sum_{B \in H_2(M, \Z)} a_{u,B} \otimes q^{m(F_{\max}) + c_1(B)} t^{-H_{\max} + \omega(B)}
		\]
		for some cohomology class $a_{u,B} \in H^{-2m(F_{\max}) - 2c_1(B)}(M)$.
		\item \label{MT2} In the case when $\omega(B) < 0$ or $\omega(B) = 0$ with $B \neq 0$, the cohomology class $a_{u,B}$ is zero.
		\item \label{MT3} Let $[N]$ denote the cohomology class represented by an $S^1$-invariant submanifold $N$ of $M$.
        Then
		\[
			\int_M a_{u,B} \cup [N] = 0
		\]
		unless the homology class $B \in H_2(M, \mathbb{Z})$ can be represented by an $S^1$-invariant $J$-holomorphic stable map intersecting both the fixed submanifold~$F_{\max}$ and the submanifold $N$.
	\end{enumerate}
\end{theorem}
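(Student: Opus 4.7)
The plan is to follow the McDuff--Tolman strategy, which rests on the geometric realization of the Seidel morphism by counting $J$-holomorphic sections of a Hamiltonian fibration combined with $S^1$-equivariant localization. Given a loop $u \in \pi_1(\Ham(M,\omega))$, one first constructs the fibration $\pi \colon E_u \to S^2$ by clutching two copies of $M \times D^2$ along the equator via the representing loop; the Seidel element is then assembled from genus-zero Gromov--Witten type invariants of $E_u$ in section classes, weighted by the vertical Chern number and symplectic area. Fixing a reference section $\sigma_0$, every other section class has the form $[\sigma_0] + B$ for some $B \in H_2(M,\Z)$, which produces the indexed sum appearing in the first claim.

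To obtain the precise form in part~(1), I would take $\sigma_0$ to be the \emph{maximal section} traced out by the $S^1$-action on a chosen point of $F_{\max}$; since $F_{\max}$ is fixed, this section is genuinely $S^1$-invariant. A weight computation on the normal bundle of $F_{\max}$ in $M$ identifies the vertical first Chern number of $\sigma_0$ with $m(F_{\max})$, while, after normalizing $H$ to be mean zero, the symplectic area of $\sigma_0$ equals $-H_{\max}$. Adding the contribution of $B$ reproduces the $q$- and $t$-exponents stated in the formula.

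Part~(2) should follow from an energy estimate. Choosing a tamed almost complex structure on $E_u$ that is standard near $F_{\max}$, one checks that $\sigma_0$ is area-minimizing among $J$-holomorphic sections, with minimal area $-H_{\max}$. Hence no $J$-holomorphic section exists when $\omega(B) < 0$, forcing $a_{u,B} = 0$. The borderline case $\omega(B) = 0$ with $B \neq 0$ is ruled out by noting that such a section would split off a nonconstant bubble of zero symplectic area, which cannot occur for a $J$-holomorphic sphere.

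Part~(3) is the main obstacle and is the heart of the McDuff--Tolman argument. Equipping $E_u$ with an $S^1$-equivariant almost complex structure, the moduli space of sections in a given class inherits an $S^1$-action whose fixed locus consists precisely of $S^1$-invariant stable maps. Localization then confines the contribution to $\int_M a_{u,B} \cup [N]$, for an $S^1$-invariant cycle $[N]$, to this fixed locus, giving the stated vanishing unless an $S^1$-invariant $J$-holomorphic representative of $B$ meets both $F_{\max}$ and $N$. The hard step is making localization rigorous on the virtual fundamental cycle: it requires equivariant transversality and careful analysis of the stratified boundary of the Gromov compactification. This technical package, rather than any single clean computation, is where the real work lies.
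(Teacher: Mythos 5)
This statement is not proved in the paper at all: it is imported verbatim, with attribution, from McDuff--Tolman \cite{McDuff-Tolman_2006} (their Theorem~1.10 and Lemma~3.10) and is used as a black box; the only work the paper does nearby is to deduce Lemma~\ref{lem} and Lemma~\ref{lem:key} from it. So there is no internal argument to compare yours with, and your text has to be judged as an attempted proof of the cited result itself.

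Judged that way, it is an outline rather than a proof, and the gaps sit exactly at the load-bearing points. In part~(1), the identification of the vertical Chern number of the maximal section with $m(F_{\max})$ and of its coupling-class area with $-H_{\max}$ is asserted, not derived; this is a genuine (if routine) computation with the clutching construction and the normalization of $H$. In part~(2), the claim that a $J$ which is ``standard near $F_{\max}$'' makes $\sigma_0$ area-minimizing among $J$-holomorphic sections is unjustified and cannot be local in nature: which section classes admit holomorphic representatives is a global question, and in \cite{McDuff-Tolman_2006} the positivity $\omega(B)\geq 0$ (with $B=0$ the only contributing zero-area class) is extracted from the analysis of sections for an $S^1$-invariant almost complex structure --- the invariant sections are the orbit-sections $\sigma_x$ over fixed points $x$, with areas $-H(x)\geq -H_{\max}$ --- combined with Gromov compactness; in other words, your part~(2) secretly relies on the same equivariant analysis as part~(3). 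And for part~(3) you openly defer the entire content (equivariant regularization of the moduli space or virtual class, and the vanishing of contributions when no $S^1$-invariant stable map in class $B$ meets both $F_{\max}$ and $N$) as ``where the real work lies.'' Identifying the correct framework is fine, but none of the three assertions is actually established by the proposal as written; to turn it into a proof you would have to carry out precisely the equivariant transversality and compactness arguments of \cite{McDuff-Tolman_2006} that you have set aside, or else simply cite that paper, as the authors do.
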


A circle action is said to be \emph{semifree} if it is free outside the fixed point set.
There is a simple characterization in terms of the weights for Hamiltonian actions.
We give a proof of the following fact for the reader's convenience.
\begin{lemma}\label{lem:semifree}
	A Hamiltonian $S^1$-action is semifree if and only if all the weights at fixed points are one of $\pm1, 0$.
\end{lemma}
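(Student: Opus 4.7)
The plan is to pass between the local (weight) and global (stabilizer) descriptions of the action using the equivariant Darboux theorem: near any $p\in M^{S^1}$, the action is equivariantly symplectomorphic to the linear diagonal action of $S^1$ on $T_pM\cong\C^n$ with integer weights $w_1,\ldots,w_n$. Both directions of the equivalence then reduce to a comparison between the weights at fixed points and the stabilizers of nearby points.

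For the forward direction, I would argue by contraposition. Suppose some weight $w_j$ at a fixed point $p$ satisfies $|w_j|\geq 2$. In the linearized model, any point $q\neq p$ on the $j$th coordinate axis sufficiently close to $p$ is stabilized precisely by the group of $|w_j|$th roots of unity, which is a nontrivial finite subgroup of $S^1$. Hence $q$ is not $S^1$-fixed yet has nontrivial finite stabilizer, so the action fails to be semifree.

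For the reverse direction, argue again by contraposition: assume some $q\in M$ has stabilizer $\Z/k$ with $k\geq 2$. Let $N$ be the connected component of the fixed locus $M^{\Z/k}$ containing $q$. Because $\Z/k$ acts by symplectomorphisms, $N$ is a closed symplectic submanifold of $M$; since $S^1$ commutes with $\Z/k$, the ambient action restricts to a Hamiltonian $S^1$-action on $N$ with moment map $H|_N$. Since $q$ is not $S^1$-fixed, its $S^1$-orbit is a $1$-dimensional submanifold contained in $N$, so $\dim N\geq 1$. By compactness of $M$, the function $H|_N$ attains a maximum at some $p\in N$, which is necessarily $S^1$-fixed. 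Now the tangent space $T_pN\subset T_pM$ is, by the slice theorem, the sum of those $S^1$-weight subspaces of $T_pM$ whose weights are divisible by $k$; since $\dim T_pN\geq 1$, there must exist a nonzero weight at $p$ divisible by $k$, and any such weight has absolute value at least $k\geq 2$, contradicting the hypothesis.

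The main delicacy lies in the reverse direction: we need to relate the stabilizer of a point \emph{not} fixed by $S^1$ to the weights at some $S^1$-fixed point, and the bridge for this is the existence of an $S^1$-fixed maximum of $H|_N$, which uses both the compactness of $M$ and the Hamiltonian character of the action.
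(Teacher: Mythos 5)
Your forward direction is fine (it is the ``obvious'' direction, and the linearized model argument is correct). The gap is in the reverse direction, at the final step. You correctly identify $T_pN=(T_pM)^{\Z/k}$ as the sum of the weight subspaces whose weights are divisible by $k$ --- but this includes the weight-zero subspace, which is always $\Z/k$-fixed. Hence $\dim T_pN\geq 1$ does \emph{not} force the existence of a \emph{nonzero} weight divisible by $k$ at $p$: a priori the maximum of $H|_N$ could lie on a positive-dimensional component of the $S^1$-fixed set contained in $N$, with all nonzero weights at $p$ equal to $\pm 1$ and $T_pN$ consisting entirely of weight-zero directions. The facts you have established at $p$ (that $p$ is $S^1$-fixed and $\dim T_pN\geq 1$) are compatible with the hypothesis you are trying to contradict, so the contradiction is not yet reached. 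What is missing is an argument that points of $N$ which are \emph{not} $S^1$-fixed accumulate at some $S^1$-fixed point.

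The gap is repairable within your framework: under the hypothesis that all weights lie in $\{0,\pm 1\}$, at any $S^1$-fixed point $x$ one has $(T_xM)^{\Z/k}=(T_xM)^{S^1}$, so by local linearization $M^{\Z/k}$ coincides with $M^{S^1}$ near $x$; thus $N\cap M^{S^1}$ is open as well as closed in the connected set $N$, and since $q\in N$ is not fixed, $N$ contains no $S^1$-fixed point at all --- contradicting that the maximum of $H|_N$ is $S^1$-fixed. Alternatively, the paper avoids the issue by a different route: it flows $q$ by the gradient of $H$ with respect to an $S^1$-invariant metric; the flow commutes with the action, so every point on the trajectory has stabilizer exactly $\Z/k$, and the limit is an $S^1$-fixed point at which non-fixed $\Z/k$-fixed points accumulate, which directly produces a nonzero weight divisible by $k$. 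Either repair works, but as written your dimension count does not close the argument.
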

\begin{proof}
	Choose an $S^1$-invariant almost complex structure~$J$ so that the Riemannian metric $g(-,-):= \omega(-, J-)$ is $S^1$-invariant. Then the gradient vector field of a moment map $H \colon M \to \mathbb{R}$ is given by $-J\underline{X}$, where $\underline{X}$ is the fundamental vector field of the action. Since $J$ is $S^1$-invariant, the flow~$\gamma_s$ of the gradient vector field $-J\underline{X}$ commutes with the $S^1$-action~$\phi_t$.
	
	Now suppose that there is a point $p \in M$ whose stabilizer is $\mathbb{Z}/k$ for some $k>1$. Since $\gamma_s$ commutes with $\phi_t$, the points $\gamma_s(p)$ have stabilizer~$\mathbb{Z}/k$ for all $s \in \mathbb{R}$. This implies that the limit point as $s \to \infty$ is a fixed point having a multiple of~$k$ as a weight. This shows that the action is semifree if all the weights are one of $0, \pm1$. The other direction is obvious.
\end{proof}

\begin{lemma}\label{lem}
	Suppose that the $S^1$-action representing~$u$ in Theorem~\ref{thm:MT} is semifree and the maximal fixed submanifold~$F_{\max}$ has real codimension two.
    Then, $a_{u,B} = 0$ for any $B$ satisfying $c_1(B) = 1$ and $\omega(B) < H_{\max} - H_{\min}$.
\end{lemma}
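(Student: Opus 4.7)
My plan is to combine the degree computation built into Theorem~\ref{thm:MT}\eqref{MT1} with the invariant-stable-map constraint of part~\eqref{MT3}, and then to exclude the relevant stable maps by a moment-map area estimate using semifreeness.

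First I would compute $m(F_{\max})$. Since $F_{\max}$ is the maximum of $H$, every weight of the isotropy $S^1$-representation on the normal bundle of $F_{\max}$ is strictly negative; the codimension-two hypothesis means there is exactly one such weight, and Lemma~\ref{lem:semifree} combined with semifreeness forces it to equal $-1$. Adding the vanishing tangential weights along $F_{\max}$ yields $m(F_{\max}) = -1$. Substituting into Theorem~\ref{thm:MT}\eqref{MT1} with $c_1(B) = 1$ gives
\[
a_{u,B} \in H^{-2m(F_{\max}) - 2c_1(B)}(M, \Q) = H^0(M, \Q) \cong \Q,
\]
so $a_{u,B}$ is a scalar. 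It therefore suffices to verify that $\int_M a_{u,B} \cup [\mathrm{pt}] = 0$, where $[\mathrm{pt}] \in H^{2n}(M, \Q)$ is Poincar\'e dual to a point.

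Next I would take $N$ to be a single point of the minimal fixed submanifold $F_{\min}$, so that $N$ is $S^1$-invariant with $[N] = [\mathrm{pt}]$. By Theorem~\ref{thm:MT}\eqref{MT3} the integral $\int_M a_{u,B} \cup [N]$ must vanish unless $B$ is represented by an $S^1$-invariant $J$-holomorphic genus-zero stable map whose image meets both $F_{\max}$ and $N$. I would then rule out such stable maps. Each irreducible component of such a stable map is either a constant map, or a non-constant map whose image lies in a fixed submanifold (so $H$ is constant on the image), or a non-trivially rotating $S^1$-invariant sphere with two fixed-point poles; for a semifree action the last type has symplectic area equal to the difference $b_i - a_i$ of $H$-values at its poles. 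Hence
\[
\omega(B) \geq \sum_i (b_i - a_i),
\]
the sum running over the rotating components with pole heights $a_i \leq b_i$. The image of the stable map is connected and contains points of both $F_{\max}$ and $F_{\min}$, so $H(\text{image})$ is a connected subset of $\R$ containing $H_{\min}$ and $H_{\max}$, hence equals $[H_{\min}, H_{\max}]$. Since this interval is covered by the intervals $[a_i, b_i]$ together with the isolated heights coming from constant or fixed components, subadditivity of Lebesgue measure gives $\sum_i (b_i - a_i) \geq H_{\max} - H_{\min}$. Combining, $\omega(B) \geq H_{\max} - H_{\min}$, contradicting the hypothesis and forcing $a_{u,B} = 0$.

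The main obstacle is the standard structural input used in the last step: that an $S^1$-invariant genus-zero $J$-holomorphic stable map decomposes into constant, fixed, and rotating $S^1$-invariant spherical components glued at fixed points, and that for a semifree action the symplectic area of a rotating component equals the moment-map height gap between its two poles. Both are part of the Seidel-representation toolkit; once they are invoked, the connectedness argument and the degree computation combine cleanly to yield the conclusion.
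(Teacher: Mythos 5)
Your proposal is correct and follows essentially the same route as the paper: the same weight/degree computation showing $a_{u,B} \in H^0$, the same application of Theorem~\ref{thm:MT}\eqref{MT3} with $N$ a point of $F_{\min}$, and the same area estimate for $S^1$-invariant stable maps via semifree gradient spheres (the paper cites \cite[Lemma~3.9]{McDuff-Tolman_2006} for exactly the structural input you flag). Your covering/measure argument just spells out in more detail the connectedness step that the paper states in compressed form.
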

\begin{proof}
	Since $\dim F_{\max} = 2n-2$ and the action is effective, the weights at~$F_{\max}$ are $(0, \dots, 0, -1)$.
	By Theorem~\ref{thm:MT}~\eqref{MT1} and the assumption that $c_1(B)=1$, the degree of the cohomology class~$a_{u,B}$ is zero.
    Then $a_{u,B}=0$ if and only if $\int_M a_{u,B} \cup [pt] = 0$.

    Suppose $a_{u,B} \neq 0$ and apply Theorem~\ref{thm:MT}~\eqref{MT3} for a point~$N$ in the minimal fixed submanifold~$F_{\min}$.
    Since the action is semifree, the gradient sphere joining two fixed points $x$ and $y$ has symplectic area $|H(x) - H(y)|$ (see for example \cite[Lemma~3.9]{McDuff-Tolman_2006}).
    The holomorphic stable map in Theorem~\ref{thm:MT}~\eqref{MT3}, whose image is connected, consists of gradient spheres joining two fixed points and the holomorphic spheres contained entirely in the fixed point set.
    Since the symplectic area of each sphere is positive, the holomorphic stable map has symplectic area at least $H_{\max} - H_{\min}$.
    This contradicts the assumption that $\omega(B) < H_{\max} - H_{\min}$.
\end{proof}

\begin{lemma}\label{lem:key}
	Let $S^1$ act semifreely on a closed symplectic manifold $(M, \omega)$ with moment map $H \colon M \to \R$.
	Suppose that the extremal fixed submanifolds~$F_{\min}$ and $F_{\max}$ with respect to~$H$ are of real codimension two.
    Then $w_G(M, \omega) \leq H_{\max} - H_{\min}$.
\end{lemma}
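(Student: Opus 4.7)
The plan is to argue by contradiction using the Seidel representation. Suppose $w_G(M, \omega) > H_{\max} - H_{\min}$. By Theorem~\ref{thm:Gromov}, this forces $\GW_{A, k}^M([pt], \alpha_2, \ldots, \alpha_k) = 0$ for every nonzero $A \in H_2(M, \Z)$ with $\omega(A) \leq H_{\max} - H_{\min}$ and every tuple of classes $\alpha_i$. My goal is to show that under this assumption, the identity $S(u) \ast S(u^{-1}) = 1$ in $QH^0(M, \Lambda)$ cannot hold, where $u \in \pi_1(\Ham(M, \omega))$ is the loop represented by the given $S^1$-action.

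I would first record the structure of the Seidel element. By Lemma~\ref{lem:semifree}, the weights at any fixed point lie in $\{0, \pm 1\}$, so the codimension-two hypothesis on $F_{\max}$ forces the nontrivial normal weight to be $-1$ and $m(F_{\max}) = -1$. Theorem~\ref{thm:MT}(1)(2) then gives
\[
S(u) = [F_{\max}] \otimes q^{-1} t^{-H_{\max}} + \sum_{B \neq 0,\, \omega(B) > 0} a_{u, B} \otimes q^{-1 + c_1(B)} t^{-H_{\max} + \omega(B)},
\]
with $a_{u, B} \in H^{2 - 2 c_1(B)}(M, \Q)$, which is automatically zero for $c_1(B) \geq 2$. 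Applying the same reasoning to the reversed action, which represents $u^{-1}$ (whose normalized moment map is $-H$ and whose maximal fixed submanifold is $F_{\min}$, again codimension two), the leading term of $S(u^{-1})$ is $[F_{\min}] \otimes q^{-1} t^{H_{\min}}$, together with terms of strictly larger $t$-exponent.

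Next I would extract the coefficient of $q^0 t^0$ in $S(u) \ast S(u^{-1})$, which must equal $[M]$, the unit of $QH^0(M, \Lambda)$. Expanding via the defining relation $\int_M (a \ast b)_A \cup c = \GW_{A, 3}^M(a, b, c)$, a triple $(B, B', A)$ contributes to this coefficient exactly when $c_1(B) + c_1(B') + c_1(A) = 2$ and $\omega(B) + \omega(B') + \omega(A) = H_{\max} - H_{\min}$, and the contribution is $\GW_{A, 3}^M(a_{u, B}, a_{u^{-1}, B'}, [pt]) \cdot [M]$. For $A \neq 0$, the nonnegativity of $\omega(B)$ and $\omega(B')$ (Theorem~\ref{thm:MT}(2)) forces $\omega(A) \leq H_{\max} - H_{\min}$, so the assumed vanishing kills the contribution. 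For $A = 0$, the degree bookkeeping together with $a_{u, B} = 0$ for $c_1(B) \geq 2$ forces $c_1(B) = c_1(B') = 1$; Lemma~\ref{lem} (applied to both $u$ and $u^{-1}$, both satisfying its hypotheses by symmetry) then requires $\omega(B), \omega(B') \geq H_{\max} - H_{\min}$, which is incompatible with $\omega(B) + \omega(B') = H_{\max} - H_{\min}$ assuming $H_{\max} > H_{\min}$ (the equality case is vacuous).

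All contributions thus vanish, so the $q^0 t^0$ coefficient of $S(u) \ast S(u^{-1})$ is zero rather than $[M]$, giving the desired contradiction and the bound $w_G(M, \omega) \leq H_{\max} - H_{\min}$. The main technical obstacle is the careful bookkeeping of cohomological degree, $q$-exponent, and $t$-exponent throughout the Seidel expansion, together with using the positivity of $\omega$ on the $B, B'$ indices from Theorem~\ref{thm:MT}(2) to control the possible $\omega(A)$ of classes appearing in the quantum product.
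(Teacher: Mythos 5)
Your proposal is correct and is essentially the paper's argument run in contrapositive form: the same expansion of $S(u)\ast S(u^{-1})=1$, the same degree and exponent bookkeeping from Theorem~\ref{thm:MT}~\eqref{MT1},\eqref{MT2}, the same use of Lemma~\ref{lem} (applied to both $u$ and $u^{-1}$) to kill the $A=0$ terms, and Theorem~\ref{thm:Gromov} to handle $A\neq 0$. The only differences are cosmetic: the paper directly extracts a nonvanishing $\GW_{A',3}^M(\,\cdot\,,\,\cdot\,,[pt])$ with $\omega(A')\leq H_{\max}-H_{\min}$ instead of assuming $w_G>H_{\max}-H_{\min}$ and deriving a contradiction, and your identification of the leading coefficients with $[F_{\max}]$, $[F_{\min}]$ is not supplied by the quoted Theorem~\ref{thm:MT} but is also never used in your argument.
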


\begin{proof}
	Since the weights at $F_{\max}$ are $(0, \dots, 0, -1)$, we have
	\[
		S(u) = \sum_{B_1 \in H_2(M, \Z)} a_{u, B_1} \otimes q^{-1 + c_1(B_1)} t^{-H_{\max} + \omega(B_1)}
	\]
	by Theorem~\ref{thm:MT}~\eqref{MT1}. If we consider the opposite circle action, the corresponding moment map is given by $-H$ and the minimal fixed submanifold~$F_{\min}$ becomes maximal.
    Hence,
	\[
		S(-u) = \sum_{B_2 \in H_2(M, \Z)} a_{-u, B_2} \otimes q^{-1 + c_1(B_2)} t^{H_{\min} + \omega(B_2)}.
	\]
	Since $S$ is a group homomorphism, we have
	\begin{align*}
		1 &= S(u - u) = S(u) * S(-u) \\
		&=  \sum_{B_1, B_2 \in H_2(M, \Z)} a_{u, B_1} * a_{-u, B_2} \otimes q^{-2 + c_1(B_1 + B_2)} \ t^{-(H_{\max} - H_{\min}) +  \omega(B_1 + B_2)} \\
		&= \sum_{A, B_1, B_2 \in H_2(M, \Z)} (a_{u, B_1} * a_{-u, B_2})_A \\
		&\qquad \qquad \qquad \otimes q^{-2 + c_1(A + B_1 + B_2)} \ t^{-(H_{\max} - H_{\min}) +  \omega(A + B_1 + B_2)}. \quad \text{(see \eqref{eq:qp})}
	\end{align*}
	By comparing the coefficient of~$1$, we see that there exist homology classes $A', B_1', B_2' \in H_2(M, \Z)$ such that
	\begin{align}\label{eq:upper_bound}
		\begin{cases}
		(a_{u, B_1'} * a_{-u, B_2'})_{A'} &\neq 0, \\
		c_1 (A' + B_1' + B_2') &= 2, \\
		\omega (A' + B_1' + B_2') &= H_{\max} - H_{\min}.
		\end{cases}
	\end{align}
	We claim that $A' \neq 0$.
    Recall from Theorem~\ref{thm:MT}~\eqref{MT1} that
	\begin{equation}\label{eq:deg_a}
		\deg (a_{u, B_1'}) = 2 - 2c_1(B_1') \quad \text{and} \quad \deg (a_{-u, B_2'}) = 2 - 2c_1(B_2').
	\end{equation}
	Suppose $A' = 0$.
    Then it follows from~\eqref{eq:upper_bound} that $c_1(B_1') = c_1(B_2') =1$.
    Moreover, the values $\omega(B_1')$ and $\omega(B_2')$ are less than $H_{\max} - H_{\min}$ by Theorem~\ref{thm:MT}~\eqref{MT2} and~\eqref{eq:upper_bound}. Then $a_{u, B_1'} = a_{-u,B_2'} = 0$ by Lemma~\ref{lem}, which contradicts the first condition of~\eqref{eq:upper_bound}.
	
	Note from~\eqref{eq:qp} and~\eqref{eq:deg_a} that
	\begin{equation}\label{eq:degree}
		\deg (a_{u, B_1'} * a_{-u, B_2'})_{A'} = 4 - 2c_1(A' + B_1' + B_2') = 0.
	\end{equation}
	By \eqref{eq:GW}, \eqref{eq:upper_bound} and \eqref{eq:degree}, we have
	 \[
	 	\mathrm{GW}_{A',3}^M(a_{u, B_1'}, a_{-u, B_2'}, [pt]) = \int_M (a_{u, B_1'} * a_{-u, B_2'})_{A'} \cup [pt] \neq 0.
	\]
	Note that $\omega(B_i') \geq 0$ by Theorem~\ref{thm:MT}~\eqref{MT2}.
    Now the lemma follows from Theorem~\ref{thm:Gromov} and \eqref{eq:upper_bound}.
\end{proof}

The upper bound obtained in Lemma~\ref{lem:key} is stabilized in the following sense.
\begin{corollary}\label{cor:stabilization}
	Let $M$ satisfy the assumption of Lemma~\ref{lem:key}. Then for any positive integer~$k$, the Gromov width of $M \times \mathbb{R}^{2k}$ with the product symplectic form is at most $H_{\max} - H_{\min}$.
\end{corollary}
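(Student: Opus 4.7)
The plan is to reduce this non-compact statement to the closed case treated in Lemma~\ref{lem:key} by compactifying the Euclidean factor $\R^{2k}$ using symplectic spheres of arbitrarily large area. The guiding observation is that a symplectic ball has compact closure, so any symplectic embedding into $M \times \R^{2k}$ actually lands in $M$ times a bounded region, which can then be packaged into a closed symplectic manifold to which Lemma~\ref{lem:key} applies.

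Concretely, suppose $\iota \colon B^{2(n+k)}(r') \hookrightarrow M \times \R^{2k}$ is a symplectic embedding for some $r' < r$. Its image is precompact and hence contained in $M \times B^{2k}(R)$ for $R$ sufficiently large. Composing with the standard symplectic embedding $B^{2k}(R) \hookrightarrow B^2(R)^k$ and, on each factor, with $B^2(R) \hookrightarrow S^2(A)$ for some $A > \pi R^2$, I obtain a symplectic embedding
\[
\iota' \colon B^{2(n+k)}(r') \hookrightarrow M \times (S^2(A))^k.
\]
Now equip $M \times (S^2(A))^k$ with the $S^1$-action that is the given semifree one on $M$ and trivial on every sphere factor. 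The stabilizers are inherited from $M$, so the product action remains semifree; the moment map may be taken as $\tilde{H}(m,z) := H(m)$, which is still normalized because $\int_M H\,\omega^n = 0$; and the extremal fixed submanifolds are $F_{\max} \times (S^2(A))^k$ and $F_{\min} \times (S^2(A))^k$, which have real codimension two with $\tilde{H}_{\max} = H_{\max}$ and $\tilde{H}_{\min} = H_{\min}$. Thus $M \times (S^2(A))^k$ satisfies the hypotheses of Lemma~\ref{lem:key}, and applying it yields
\[
\pi (r')^2 \leq w_G\bigl(M \times (S^2(A))^k\bigr) \leq H_{\max} - H_{\min}.
\]
Letting $r' \to r$ and taking the supremum over admissible $r$ proves the corollary.

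The main subtlety is verifying that the compactification preserves all of the geometric features demanded by Lemma~\ref{lem:key}, in particular that the extremal fixed submanifolds on the product remain codimension two. This is exactly why extending the action \emph{trivially} on the sphere factors is the right choice: a non-trivial action on the spheres would create additional fixed points (the poles) and balloon the codimension of the extremal stratum, whereas the trivial extension merely crosses each fixed component with $(S^2(A))^k$, which preserves the codimension. The rest is routine packing and bookkeeping.
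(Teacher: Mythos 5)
Your argument is correct and follows essentially the same strategy as the paper: since the image of the ball is precompact, one compactifies the $\R^{2k}$ factor by a closed symplectic manifold, extends the semifree circle action trivially (which keeps the extremal fixed submanifolds of codimension two and leaves $H_{\max}-H_{\min}$ unchanged), and applies Lemma~\ref{lem:key}. The only difference is cosmetic: you compactify via $(S^2(A))^k$ whereas the paper uses the torus $\R^{2k}/\Lambda$ for a sufficiently large lattice $\Lambda$.
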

\begin{proof}
	Let $\iota \colon B^{2n+2k}(r) \hookrightarrow M \times \R^{2k}$ be a symplectic embedding. Since the image of~$\iota$ is contained in a compact region, by taking a sufficiently large lattice $\Lambda \subset \R^{2k}$ and setting $V:= \R^{2k}/\Lambda \cong T^{2k}$, we have a symplectic embedding $\iota \colon B^{2n+2k}(r) \hookrightarrow M \times V$. The $S^1$-action on~$M$ extends to an action on $M \times V$ by acting trivially on~$V$. It is straightforward to check this action satisfies the assumption of Lemma~\ref{lem:key} and we have $\pi r^2 \leq H_{\max} - H_{\min}$.
\end{proof}

Hamiltonian $T^n$-actions have plenty of sub-circle actions induced from the choice of a primitive vector $u \in \mathbb{Z}^n$. The weights for the induced action are obtained from the weights for the $T^n$-action by pairing with~$u$ considered as an element in the Lie algebra of~$T^n$. Since the primitive edge vectors of the Delzant polytope~$P$ are weights for the $T^n$-action, Lemma~\ref{lem:semifree} together with Lemma~\ref{lem:key} implies the following proposition.
\begin{proposition}\label{prop:key}
	Let $(M, \omega)$ be a symplectic toric manifold whose moment polytope is $P \subset \R^n$.
	Suppose that there exists a primitive vector $u \in \Z^n$ satisfying the following two conditions.
	\begin{itemize}
    		\item $\langle u, \eta \rangle \in \{0, \pm1 \}$ for any primitive vector $\eta$ parallel to an edge of $P$.
	    	\item $P$ has supporting hyperplanes of the form $\{ x \in \R^n \mid \langle x, u \rangle \leq \lambda \}$ and $\{ x \in \R^n \mid \langle x, u \rangle \geq \mu \}$.
	\end{itemize}
	Then the Gromov width of $(M, \omega)$ is at most $\lambda - \mu$.
\end{proposition}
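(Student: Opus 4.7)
The plan is to apply Lemma~\ref{lem:key} to a sub-circle of the torus action determined by $u$. The primitive vector $u \in \Z^n$ yields an inclusion $S^1 \hookrightarrow T^n$, and restricting the Hamiltonian $T^n$-action gives a Hamiltonian $S^1$-action on $(M, \omega)$ whose moment map $H$ is the composition of the $T^n$-moment map with the linear functional $\langle \cdot, u \rangle \colon \R^n \to \R$. The image of $H$ is therefore $\{\langle x, u \rangle \mid x \in P\}$, which by the second hypothesis equals the closed interval $[\mu, \lambda]$, giving $H_{\min} = \mu$ and $H_{\max} = \lambda$.

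Next I would verify that this $S^1$-action is semifree. At any $T^n$-fixed point corresponding to a vertex $v$ of $P$, the weights of the tangential $T^n$-representation are the primitive edge vectors $\eta_1, \ldots, \eta_n$ at $v$, so the weights of the induced $S^1$-action are the integers $\langle u, \eta_i \rangle$, which by the first hypothesis all lie in $\{0, \pm 1\}$. Since every connected component of the $S^1$-fixed locus is a toric submanifold of $M$ containing such a $T^n$-fixed point, and the normal $S^1$-weights along this component appear among those same values $\langle u, \eta_i \rangle$, Lemma~\ref{lem:semifree} shows the $S^1$-action is semifree.

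Finally, to invoke Lemma~\ref{lem:key} I need the extremal fixed submanifolds $F_{\max}$ and $F_{\min}$ to have real codimension two. These submanifolds are the preimages of the faces of $P$ lying in the supporting hyperplanes $\{\langle x, u \rangle = \lambda\}$ and $\{\langle x, u \rangle = \mu\}$ respectively. Reading the second hypothesis as saying that these half-spaces are defining ones of the Delzant polytope $P$, the corresponding hyperplanes contain facets of $P$, so their preimages in $M$ are of real codimension two. Lemma~\ref{lem:key} then yields $w_G(M, \omega) \leq H_{\max} - H_{\min} = \lambda - \mu$. The main subtlety I foresee is precisely this last step: one must confirm that the supporting hyperplanes in the hypothesis bound genuine facets of $P$ rather than lower-dimensional faces, which is exactly what is needed to place us in the setting of Lemma~\ref{lem:key}.
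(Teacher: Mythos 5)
Your proposal is correct and follows essentially the same route as the paper, which obtains Proposition~\ref{prop:key} precisely by inducing the circle action from $u$, applying Lemma~\ref{lem:semifree} to the pairings $\langle u, \eta \rangle$ with the primitive edge vectors, and then invoking Lemma~\ref{lem:key}. Your closing caveat—that the supporting hyperplanes must contain facets so that $F_{\max}$ and $F_{\min}$ have real codimension two—is exactly the reading the paper intends (and uses in its application, where the hyperplanes are facet-defining), so the hypotheses of Lemma~\ref{lem:key} are met as you argue.
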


\section{Graph associahedra}\label{sec:graph}
The proof of Theorem~\ref{thm:main} proceeds as follows. 
We first assume that the graph~$G$ is connected so that $P:= P_{\cB(G)}$ is $n$-dimensional. 
The upper bound is obtained from the desciption of the edges of~$P$ (Lemma~\ref{lem:edge_property}) and Proposition~\ref{prop:key}. 
The lower bound is given by Lemma~\ref{lem:lower_bound} and Proposition~\ref{prop:lower_bound}. 
Finally, the proof for the general case follows from the observation that $P$ is the product of the graph associahedra of the connected components.
\subsection{Faces of nestohedra}
Let $\cB$ be a building set on $[n+1]$, which is not necessarily obtained from a graph.
It is known that the nestohedron~$P_\cB$ can be described as the intersection of the hyperplane
$$
    H_\cB = \left\{ (x_1, \ldots, x_{n+1} ) \in \R^{n+1} \middle\vert~ \sum_{i=1}^{n+1} x_{i} = |\cB| \right\}
$$
with the halfspaces
$$
    H_{I, \geq} = \left\{ (x_1, \ldots, x_{n+1} ) \in \R^{n+1} \middle\vert~ \sum_{i \in I} x_{i} \geq |\cB|_I | \right\}
$$
for all $I \neq [n+1]$ in~$\cB$, where $\cB|_I = \{ J \in \cB \mid J \subset I\}$.
If $[n+1] \in \cB$, then $P_\cB$ is $n$-dimensional and this representation is irredundant. That is, each halfspace~$H_{I, \geq}$ with $I \neq [n+1]$ defines a facet, denoted by $F_I$, of $P_\cB$.
It should be noted that $F_{I_1} \cap \cdots \cap F_{I_k} \neq \emptyset$ if and only if the following two conditions are satisfied:
\begin{enumerate}[label = (\alph*)]\label{d}
  \item \label{a}
  	for any $i,j$, $1 \leq i < j \leq k$, either $I_i \subset I_j$, or $I_j \subset I_i$, or $I_i \cap I_j = \emptyset$;
  \item \label{b}
  if the sets $I_{i_1}, \ldots, I_{i_p}$ are pairwise disjoint for some $p \geq 2$, then $I_{i_1} \cup \cdots \cup I_{i_p} \not\in \cB$.
\end{enumerate}
We refer the reader to Buchstaber--Panov~\cite[Section~1.5]{Buchstaber-Panov2015} for details.

The following lemma gives a description on the edges of~$P_{\cB}$.
\begin{lemma} \label{lem:edge_property}
    Let $\cB$ be a building set on $[n+1]$ such that $[n+1] \in \cB$.
    Let $\eta$ be a primitive vector parallel to an edge of $P_{\cB} \subset \R^{n+1}$.
    Then, $\eta = e_j - e_k$ for some $1\leq j, k \leq n+1$, where $e_i$ is the $i$th coordinate vector of $\R^{n+1}$.
\end{lemma}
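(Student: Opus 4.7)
The plan is to exploit the Minkowski sum decomposition $P_\cB = \sum_{I \in \cB} \Delta_I$ from~\eqref{eq:Minkowski_sum}. Any edge $E$ of a Minkowski sum of polytopes is the locus of maximizers of some generic linear functional $\ell$, and since taking the argmax of $\ell$ commutes with Minkowski sum, one has $E = \sum_{I \in \cB} F_I(\ell)$, where $F_I(\ell) \subseteq \Delta_I$ is the $\ell$-maximal face of the summand. My first step would be to record what each $F_I(\ell)$ can be: every face of $\Delta_I$ is a subsimplex $\Delta_{J(I)}$ for some nonempty $J(I) \subseteq I$, so $F_I(\ell)$ is either a vertex $e_j$ when $|J(I)| = 1$, a segment in the direction $e_j - e_k$ when $J(I) = \{j,k\}$, or a simplex of dimension at least two when $|J(I)| \geq 3$.

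The second step is to use the constraint that $E = \sum_{I} \Delta_{J(I)}$ is one-dimensional. Because the affine dimension of a Minkowski sum equals the dimension of the sum of the linear spans of its translated summands, every $\Delta_{J(I)}$ must lie inside a single affine line. This immediately rules out any $J(I)$ with $|J(I)| \geq 3$, and if two of the $J(I)$'s happen to have cardinality two, say $\{j,k\}$ and $\{j',k'\}$, then the edge vectors $e_j - e_k$ and $e_{j'} - e_{k'}$ must be parallel; but two such coordinate-difference vectors are parallel only when $\{j,k\} = \{j',k'\}$. Consequently a single unordered pair $\{j,k\}$ governs all the nontrivial contributions, and the direction of $E$ is a nonzero scalar multiple of $e_j - e_k$.

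To conclude, I would observe that $e_j - e_k$ is already primitive in $\mathbb{Z}^{n+1}$, so up to sign the primitive edge vector $\eta$ equals $e_j - e_k$, which is exactly the asserted form. The only step needing genuine care is the Minkowski-sum dimension bookkeeping, but it reduces to the elementary fact that two distinct coordinate-difference vectors are parallel only when they share the same unordered index pair; everything else follows immediately from the face structure of the standard simplex $\Delta_I$. I do not anticipate any serious obstacle, and this approach has the pleasant feature that the hypothesis $[n+1] \in \cB$ (which makes $P_\cB$ full-dimensional and its facet description irredundant) is never used, so the conclusion in fact holds for any building set.
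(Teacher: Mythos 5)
Your argument is correct, but it runs along a genuinely different track from the paper's. You work with the V-side of the picture: you use the Minkowski sum presentation $P_\cB = \sum_{I \in \cB} \Delta_I$ from~\eqref{eq:Minkowski_sum}, the standard fact that the $\ell$-maximal face of a Minkowski sum is the Minkowski sum of the $\ell$-maximal faces of the summands, and then the dimension count forcing every summand face to be a point or a segment, all segments parallel to a single $e_j - e_k$. (Two small wording points: for an edge the functional $\ell$ is not ``generic'' but merely chosen in the relative interior of the edge's normal cone, i.e.\ one only needs that every face of a polytope is exposed; and one should note explicitly that at least one summand face is a segment, since otherwise the sum would be a point rather than an edge --- both are immediate.) The paper instead works with the H-description: an edge is written as $F_{I_1} \cap \cdots \cap F_{I_{n-1}}$, the building-set conditions \ref{a} and \ref{b} are used to produce a strictly increasing chain $J_1 \subsetneq \cdots \subsetneq J_n = [n+1]$ whose cardinalities jump by one except at a single step, and the vanishing of the partial sums $\sum_{i \in J_t} a_i$ then pins down $\eta = e_j - e_k$. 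Your route buys generality and conceptual clarity: as you observe, it never uses $[n+1] \in \cB$, and in fact it never uses the building-set axioms at all --- it proves the statement for an arbitrary Minkowski sum of coordinate simplices, which is the familiar fact that generalized permutohedra have edges in root directions. What the paper's route buys is self-containedness within the framework it needs anyway: the facet inequalities $H_{I,\geq}$ and the intersection conditions \ref{a}, \ref{b} are exactly the tools used in the rest of Section~\ref{sec:graph}, so the authors get the edge description without invoking the face structure of Minkowski sums.
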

\begin{proof}
	Assume that $\eta = \sum_{i=1}^{n+1} a_i e_i$ is parallel to an edge $e = F_{I_1} \cap \cdots \cap F_{I_{n-1}}$ for some $I_1, \dots, I_{n-1} \in \cB$.
    Since $e \subset H_{\cB}$, we have
    \begin{equation}\label{eqn:edge_in_hyperplane}
        \langle \eta, \sum_{i=1}^{n+1} e_i \rangle = \sum_{i=1}^{n+1} a_i = 0.
    \end{equation}
    The edge~$e$ is also contained in the boundary of $H_{I_t, \geq}$, so
    \begin{equation}\label{eqn:edge}
        \langle \eta, \sum_{i \in I_t} e_i \rangle = \sum_{i \in I_t} a_i = 0
    \end{equation}
    for all $t = 1, \ldots, n-1$.

	We relabel $I_1, \dots, I_{n-1}$ so that $I_p \subset I_q$ implies $p \leq q$.
    Set $J_t:= \cup_{s \leq t} I_s$ with the convention $J_0:= \emptyset$ and $J_n:=[n+1]$.
    We claim that
	\begin{equation}\label{eq:nest_j}
		J_p \subsetneq J_q \quad \text{for } p<q \leq n.
	\end{equation}
	Suppose that $J_{t-1} = J_t$ for some~$t < n$.
    Then $I_t \subset \cup_{s < t}I_s$.
    Pick all maximal elements among $I_1, \dots, I_{t-1}$.
    There are at least two such elements since $I_t \not \subset I_s$ for $s < t$.
    Maximal elements are pairwise disjoint by the condition~\ref{a}.
    Now their union is not in~$\cB$ by the condition~\ref{b}, which contradicts the assumption $I_t \in \cB$.
    The case when $t=n$ is also impossible since $J_{n-1} \notin \cB$ by the condition~\ref{b} but $[n+1] \in \cB$.
	
	Note from the condition~\ref{a} that the equations~\eqref{eqn:edge_in_hyperplane} and~\eqref{eqn:edge} can be written as
	\begin{equation}\label{eq:zerosum}
		\sum_{i \in J_t} a_i = 0 \quad \text{for } t=1, \ldots, n.
	\end{equation}
	It follows from~\eqref{eq:nest_j} that there exists $0 \leq t_0 < n$ such that $|J_{t_0+1}| = |J_{t_0}| + 2$ and $|J_{t+1}| = |J_t| + 1$ for $t \neq t_0$.
    Let $j$ and $k$ be the elements of $J_{t_0+1} \setminus J_{t_0}$.
    Now the equation~\eqref{eq:zerosum} shows that $a_j + a_k = 0$ and $a_i = 0$ for $i \neq j, k$.
    Therefore, $\eta = e_j - e_k$ up to sign.
\end{proof}

\subsection{The case when $G$ is connected}
Now we focus on the case when the building set is given by a simple graph~$G$ with the vertex set~$[n+1]$. 
We assume that $G$ is connected for the moment and return to the general case in the next subsection.
Recall from~\eqref{eq:graph_building_set} that the facets of the graph associahedron~$P_{\cB(G)}$ are labeled by nonempty proper subsets~$I$ of~$[n+1]$ such that the induced subgraph~$G|_I$ is connected. 
Recall also that $k_i = k_i(G)$ denotes the number of connected induced subgraphs of~$G$ containing the vertex~$i$. To simplify the argument, by relabeling the vertices if necessary, we will assume that $k_{i}$ is minimal when $i=n+1$.

The following is a restatement of our main theorem with the assumption that $G$ is connected.
\begin{theorem}\label{thm:restate_main}
    Let $G$ be a connected simple graph with the vertex set $[n+1]$ such that $k_{i}$ is minimal when $i=n+1$.
    Then
    $$
        w_G (M_G, \omega_G) = k_{n+1} - 1.
    $$
\end{theorem}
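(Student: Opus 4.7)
The plan is to prove the asserted equality by establishing both inequalities for the moment polytope $P := P_{\cB(G)}$: the upper bound $w_G(M_G, \omega_G) \leq k_{n+1} - 1$ via Proposition~\ref{prop:key}, and the lower bound $w_G(M_G, \omega_G) \geq k_{n+1} - 1$ via Proposition~\ref{prop:lower_bound} by inscribing an explicit diamond inside $P$.

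For the upper bound I would apply Proposition~\ref{prop:key} with the primitive vector $u$ corresponding to the linear functional $x \mapsto x_{n+1}$ on $H_\cB$; equivalently, $u$ is the class of $e_{n+1}$ in the quotient lattice $\Z^{n+1}/\Z(1,\ldots,1) \cong \Z^n$, which is primitive. The first hypothesis of Proposition~\ref{prop:key} is immediate: Lemma~\ref{lem:edge_property} ensures that every primitive edge vector of $P$ is of the form $e_j - e_k$, so $\langle u, e_j - e_k \rangle \in \{0, \pm 1\}$. For the two supporting hyperplanes, the facet $F_{\{n+1\}}$ is defined by $x_{n+1} \geq |\cB(G)|_{\{n+1\}}| = 1$, giving $\mu = 1$, while the Minkowski sum description~\eqref{eq:Minkowski_sum} yields, for any $x = \sum_{I \in \cB(G)} p_I \in P$ with $p_I \in \Delta_I$, the bound
\[
    x_{n+1} = \sum_{I \ni n+1} (p_I)_{n+1} \leq |\{I \in \cB(G) : n+1 \in I\}| = k_{n+1},
\]
with equality achieved by choosing $p_I = e_{n+1}$ whenever $n+1 \in I$. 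Hence $\lambda = k_{n+1}$, and Proposition~\ref{prop:key} gives $w_G(M_G, \omega_G) \leq k_{n+1} - 1$.

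For the lower bound I would exhibit a copy of $\Diamond(k_{n+1}-1)$ inside $P$ and invoke Proposition~\ref{prop:lower_bound}. My construction is vertex-based: choose a vertex $v$ lying on the facet $F_{\{n+1\}}$ (so $x_{n+1}(v) = 1$) and follow the $n$ edges of $P$ emanating from $v$. By the Delzant property their primitive directions form an integral basis of the relevant lattice, and by Lemma~\ref{lem:edge_property} each direction has the form $e_j - e_k$. Exactly one of these edges leaves $F_{\{n+1\}}$, with direction $e_{n+1} - e_j$ for some $j \in [n]$; since $x_{n+1}$ increases at unit speed along it and its maximum on $P$ is $k_{n+1}$, the length of this transverse edge equals $k_{n+1} - 1$ precisely when it terminates at a vertex achieving $x_{n+1} = k_{n+1}$. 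The remaining $n - 1$ edges are edges of the facet $F_{\{n+1\}}$, and their lengths depend on the combinatorics of $v$.

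The principal obstacle is to choose $v$ so that all $n$ edges, the transverse one and the $n - 1$ tangent ones, simultaneously have affine length at least $k_{n+1} - 1$; a small calculation for $G = P_3$ already shows that not every vertex on $F_{\{n+1\}}$ works, so the selection must be made carefully. I would pick $v$ as the vertex corresponding to a maximal nested set of the form $\{\{n+1\}\} \cup \mathcal{N}'$ in which $\mathcal{N}'$ is tailored to push each other facet incident to $v$ as far from $v$ as possible, then express each edge length as a combinatorial count of nested sets missing prescribed elements and invoke the minimality of $k_{n+1}$ among the $k_i$ to obtain the uniform lower bound $k_{n+1} - 1$; this is the content of the forthcoming Lemma~\ref{lem:lower_bound} announced in the overview of Section~\ref{sec:graph}. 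Once such a $v$ is found, the convex hull of $v$ with the length-$(k_{n+1} - 1)$ initial segments of the $n$ edges is a copy of $\Diamond(k_{n+1} - 1)$ in $P$, and Proposition~\ref{prop:lower_bound} completes the proof.
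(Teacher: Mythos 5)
Your upper bound is correct and is in substance the paper's own argument: on $H_{\cB(G)}$ one has $x_{n+1}=|\cB(G)|-\sum_{i=1}^{n}x_i$, so your functional coincides, up to sign and an additive constant, with the covector $u=(1,\dots,1)$ that the paper feeds into Proposition~\ref{prop:key}, and Lemma~\ref{lem:edge_property} gives the edge condition in both cases. The one real difference is how the upper supporting hyperplane $x_{n+1}\le k_{n+1}$ is produced: you read it off from the Minkowski sum, whereas the paper first proves (Lemma~\ref{lem:parallel_facets}, using the minimality of $k_{n+1}$) that $[n]\in\cB(G)$, so that the level $x_{n+1}=k_{n+1}$ is the facet $F_{[n]}$. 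This is not purely cosmetic: Proposition~\ref{prop:key} is proved via Lemma~\ref{lem:key}, which needs the extremal fixed submanifolds of the circle action generated by $u$ to have real codimension two, i.e.\ the extremal levels of $\langle \cdot,u\rangle$ to be facets of $P$. Your computation shows the value $k_{n+1}$ is attained, but the face on which it is attained is a translate of $P_{\cB(G)|_{[n]}}$, which is a facet precisely when $G|_{[n]}$ is connected; so to apply the proposition as it is actually proved you still need Lemma~\ref{lem:parallel_facets} (or some other use of the minimality of $k_{n+1}$). This is a repairable caveat.

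The genuine gap is the lower bound, which is the substantive half of the theorem and which you do not prove. Your plan is to find a vertex $v$ on $F_{\{n+1\}}$ all of whose $n$ incident edges have affine length at least $k_{n+1}-1$; you note yourself that not every such vertex works, and you then defer the choice of $v$ and all length estimates to ``the forthcoming Lemma~\ref{lem:lower_bound}''. That lemma is not available to cite --- it is exactly what must be proved --- and in the paper it asserts something different from your sketch: no vertex is used at all. The paper places the segments through the interior point $(a,\dots,a)$ with $a=(|\cB|-k_{n+1}-1)/(n-1)$, taking the $n$ axis-parallel segments whose $i$th coordinate runs over $[1,k_{n+1}]$; their containment in $P_G$ reduces to the inequality~\eqref{eq:increasing}, namely $\frac{|\cB|-k_{n+1}-1}{n-1}\ge\frac{|\cB|_I|-1}{|I|-1}$ for all relevant $I\in\cB$, which is established by showing that $f(I)=(|\cB|_I|-1)/(|I|-1)$ increases along nested building-set elements, combined with the inductive estimate $n\,k_i(G)\ge|\cB|-1$ of Lemma~\ref{lem:inequality} and the minimality of $k_{n+1}$. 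Nothing in your proposal substitutes for these combinatorial estimates: you give no criterion selecting the good vertex, no proof that such a vertex exists for an arbitrary connected $G$ (in particular your transverse edge must reach the level $x_{n+1}=k_{n+1}$, which is also unproved), and no lower bound on the lengths of the $n-1$ tangent edges. As it stands, the inequality $w_G(M_G,\omega_G)\ge k_{n+1}-1$ is asserted rather than proved, so the proposal does not yet establish the theorem.
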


The following two graph theoretic lemmas will be used in the proof of Theorem~\ref{thm:restate_main}.
\begin{lemma} \label{lem:parallel_facets}
    The subgraph $G|_{[n]}$ induced by $[n] = [n+1]\setminus \{n+1\}$ is connected.
\end{lemma}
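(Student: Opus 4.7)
The plan is to prove the contrapositive. Suppose $G|_{[n]}$ is disconnected; setting $v := n+1$, this says $v$ is a cut vertex of $G$, and I aim to produce some $u \neq v$ with $k_u(G) < k_v(G)$, contradicting the minimality hypothesis on $k_v$.

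Let $C_1, \ldots, C_r$ ($r \geq 2$) be the connected components of $G - v$ and set $G_t := G|_{C_t \cup \{v\}}$. Since $G$ is connected, $v$ has a neighbor in every $C_t$, so $g_t := k_v(G_t) \geq 2$. A connected induced subgraph of $G$ containing $v$ decomposes uniquely as $\{v\} \cup \bigsqcup_t T_t$ with $T_t \subseteq C_t$ and $T_t \cup \{v\}$ connected in $G_t$, so $k_v(G) = \prod_t g_t$. Splitting analogously for $u \in C_s$ according to whether $v \in S$ yields $k_u(G) = k_u(G|_{C_s}) + k_{u,v}(G_s) \cdot P$ with $P := \prod_{t \neq s} g_t \geq 2^{\,r-1} \geq 2$. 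Consequently $k_u(G) < k_v(G)$ is equivalent to
\[
k_u(G|_{C_s}) \;<\; P \cdot \bigl(g_s - k_{u,v}(G_s)\bigr).
\]

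Next I choose $u$. If some $C_t = \{u_0\}$ is a singleton, then $k_{u_0}(G) = 1 + P < 2P \leq k_v(G)$ and we are done. Otherwise, take a leaf block $B$ of the block-cut tree of $G$ whose unique cut vertex of $G$ is $v$; then $C_s = B - v$ and $G_s = B$. If $B$ is the bridge $\{v, u\}$ the vertex $u$ is a leaf of $G$, giving $k_u(G) = 1 + P < 2P = k_v(G)$, and we are done again. In the remaining case $B$ is $2$-connected with $|B| \geq 3$, every $u \in B - v$ is non-cut in $G$, $B - u$ is connected, and since $P \geq 2$ it suffices to find $u \in B - v$ with $k_u(B - v) < 2\, k_v(B - u)$.

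The main obstacle is this last inequality inside a $2$-connected block, because simple examples show that $v$ can be the unique $k$-minimizer of $B$ and the ratio $k_u(B - v)/k_v(B - u)$ can be arbitrarily close to $2$. My plan is to prove it on average: summing $k_u(B - v) - 2\, k_v(B - u)$ over $u \in B - v$ and exchanging orders of summation reduces the task to
\[
\sum_{\substack{S \subseteq B \text{ connected} \\ v \notin S}} |S| \;<\; 2 \sum_{\substack{S \subseteq B \text{ connected} \\ v \in S}} \bigl(|B| - |S|\bigr),
\]
which I would establish by an explicit injection sending each pair $(S, w)$ on the left to a pair on the right built from $S \cup \{v\}$ together with a chosen external vertex of $B \setminus (S \cup \{v\})$, exploiting $2$-connectivity of $B$ to produce enough external slots in two different ways. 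Any $u \in B - v$ realizing the strict inequality on average then supplies the required contradiction.
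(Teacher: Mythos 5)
Your overall strategy (contrapositive, decompose connected subgraphs at the cut vertex $v=n+1$, get $k_v(G)=\prod_t g_t$ and $k_u(G)=k_u(G|_{C_s})+k_{u,v}(G_s)\,P$) is sound, but the proof has two genuine gaps. First, the case analysis is incomplete: after disposing of singleton components you ``take a leaf block $B$ of the block-cut tree of $G$ whose unique cut vertex of $G$ is $v$,'' but such a block need not exist. For example, if $G$ is the path $1-2-v-3-4$ with $v=n+1$ in the middle, then no component of $G-v$ is a singleton, yet the leaf blocks of $G$ are the edges $\{1,2\}$ and $\{3,4\}$, whose cut vertices are $2$ and $3$, not $v$; equivalently, no component $C_s$ satisfies $C_s=B-v$ with $G_s=B$ a block. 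The block of $G_s$ containing $v$ (which does exist and is unique, since $v$ is not a cut vertex of $G_s$) generally satisfies $B-v\subsetneq C_s$, and then $k_u(G|_{C_s})$ can greatly exceed $k_u(B-v)$, so your reduction to a $2$-connected (or bridge) $G_s$ does not go through. Second, even in the $2$-connected case the heart of the argument --- the averaging inequality $\sum_{S\not\ni v}|S| < 2\sum_{S\ni v}(|B|-|S|)$ over connected induced subgraphs of $B$ --- is only announced as something you ``would establish by an explicit injection''; as you yourself flag it as the main obstacle, the proof is not complete without it.

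For comparison, the paper's argument is far more economical and avoids blocks entirely: choose neighbors $1\in C_1$ and $2\in C_2$ of $v$ in two different components, write $k_i=\ell_i+m_i$ (subgraphs through $i$ that do, resp.\ do not, contain $v$), and assume $m_1\le m_2$. Every connected subgraph of $C_2$ through $2$ stays connected after adjoining $v$, so among subgraphs through $v$ there are at least $\ell_1$ containing $1$, at least $m_2$ containing $2$ but not $1$, plus $\{v\}$ itself; hence $k_v\ge \ell_1+m_2+1>\ell_1+m_1=k_1$, contradicting minimality of $k_v$. You may want to keep your product formula $k_v(G)=\prod_t g_t$ in mind as a useful identity, but for this lemma a direct injection of this kind replaces both of the missing steps.
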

\begin{proof}
	It holds when $n =1$. Suppose $G|_{[n]}$ is not connected with $n >1$. Then $G|_{[n]}$ has at least two connected components, say $G_1$ and $G_2$. By relabeling the vertices if necessary, we may assume that $1 \in G_1$, $2 \in G_2$ and $n+1$ is adjacent to both~$1$ and~$2$. Let $\ell_i$ (respectively, $m_i$) denote the number of connected induced subgraphs containing~$i$ and also containing $n+1$ (respectively, but not containing $n+1$). Then $k_i = \ell_i + m_i$. We assume $m_1 \leq m_2$ without loss of generality.
	
	A connected induced subgraph of~$G$ containing~$n+1$ satisfies one of the following three mutually exclusive conditions:
	\begin{itemize}
		\item it contains~$1$,
		\item it contains~$2$ but does not contain~$1$,
		\item it does not contain either~$1$ or~$2$.
	\end{itemize}
	Note that there are at least $m_2$ graphs satisfying the second condition. Hence, we have $k_{n+1} \geq \ell_1 + m_2 + 1$. Since $k_1 = \ell_1 + m_1$ and $m_1 \leq m_2$, we conclude that $k_{n+1} > k_1$. This contradicts the minimality of~$k_{n+1}$.
\end{proof}

\begin{lemma}\label{lem:inequality}
	Let $G$ be as above and $\cB:= \cB(G)$ the building set obtained from~$G$. Then we have
	\begin{equation}\label{eq:inequality}
		n \cdot k_i(G) \geq |\cB| -1
	\end{equation}
	for all $i=1, \dots, n+1$ and $n \geq 1$.
\end{lemma}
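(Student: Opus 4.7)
The plan is to prove $n \cdot k_i(G) \geq |\cB| - 1$ by induction on $n$. Since $k_{n+1}$ is assumed to be the minimum of the $k_i$'s, the inequality for a general $i$ follows from the case $i = n+1$, so I reduce to showing $n \cdot k_{n+1}(G) \geq |\cB| - 1$. For the base case $n = 1$, the only connected simple graph is $K_2$, where $|\cB| = 3$ and $k_{n+1} = 2$, so the inequality $1 \cdot 2 \geq 3 - 1$ holds (with equality).

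For the inductive step, set $G' := G|_{[n]}$. By Lemma~\ref{lem:parallel_facets}, $G'$ is connected. Partitioning the connected induced subgraphs of $G$ according to whether they contain $n+1$ gives
\[
|\cB(G)| = k_{n+1}(G) + |\cB(G')|,
\]
so it suffices to prove $(n-1)\, k_{n+1}(G) \geq |\cB(G')| - 1$. Since $G$ is connected, I can pick a vertex $v \in [n]$ adjacent to $n+1$. The key observation is that for any connected induced subgraph $I$ of $G'$ with $v \in I$, the set $I \cup \{n+1\}$ is connected in $G$ via the edge $\{v, n+1\}$. Hence $I \mapsto I \cup \{n+1\}$ is an injection from the collection of connected induced subgraphs of $G'$ containing $v$ into the collection of connected induced subgraphs of $G$ containing $n+1$, which gives $k_{n+1}(G) \geq k_v(G')$.

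To conclude, I apply the inductive hypothesis to $G'$, a connected simple graph on $n$ vertices: it yields $(n-1)\, k_v(G') \geq |\cB(G')| - 1$. Chaining this with $k_{n+1}(G) \geq k_v(G')$ closes the induction. The main subtlety is that the inductive hypothesis must be invoked at the specific vertex $v$ chosen as a neighbor of $n+1$, rather than at a minimizer of $k_j(G')$; this is fine because the conclusion of the lemma is stated for all $i$, with the minimality assumption entering only through its use in Lemma~\ref{lem:parallel_facets} to ensure connectivity of $G'$. I do not anticipate a substantive obstacle beyond recognizing this injection.
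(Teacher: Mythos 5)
Your proof is correct and follows essentially the same route as the paper: induction on $n$, using Lemma~\ref{lem:parallel_facets} to get connectivity of $G'=G|_{[n]}$, the count $|\cB(G)|=|\cB(G')|+k_{n+1}(G)$, and the comparison $k_{n+1}(G)\geq k_v(G')$ for a neighbor $v$ of $n+1$ combined with the inductive hypothesis at $v$. The only (harmless) difference is that you reduce to the minimizing vertex first and use a non-strict inequality, whereas the paper argues for all $i$ at once via $k_i(G)\geq k_{n+1}(G)>k_m(G')$.
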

\begin{proof}
	We prove the lemma by induction on~$n$. The case when $n=1$ is trivial. Now suppose that \eqref{eq:inequality} holds for all connected simple graphs with at most $n$ vertices.
    Set $G' = G|_{[n]}$.
    By the induction hypothesis, the following inequality
	\[
		(n-1)k_i(G') \geq |\cB|_{[n]}| - 1
	\]
	holds for all $i=1, \dots, n$. Let $m$ be a vertex of~$G$ adjacent to the vertex~$n+1$.
    Then
	\[
		k_i(G) \geq k_{n+1}(G) > k_m(G')  \geq \frac{|\cB|_{[n]}| - 1}{n-1}.
	\]
	This implies
	\[
		(n-1) k_i(G) > |\cB|_{[n]}| - 1 = (|\cB| - k_{n+1}(G)) - 1 \geq (|\cB| - k_i(G)) - 1,
	\]
	which proves \eqref{eq:inequality}.
\end{proof}

In order to prove Theorem~\ref{thm:restate_main}, we identify the hyperplane $H_{\cB(G)}$ with $\R^n$ by the projection
$(x_1, \ldots, x_n, x_{n+1}) \mapsto (x_1, \ldots, x_n)$.
This projection sends the lattice $H_{\cB(G)} \cap \Z^{n+1}$ isomorphically onto~$\Z^n$. Let $P_G \subset \R^n$ be the image of $P_{\cB(G)}$ under this identification. Set
\begin{equation}\label{eq:a}
	a:= \frac{|\cB(G)| - k_{n+1} - 1}{n-1}.
\end{equation}
Let $L_i$ be the line segment whose $i$th coordinate varies from~$1$ to $k_{n+1}$ while all the other coordinates remain to be~$a$.

\begin{lemma} \label{lem:lower_bound}
The line segment~$L_i$ is contained in~$P_G$ for all $i=1, \dots, n$. Moreover, they intersect at one point $(a, \dots, a) \in P_G$.
\end{lemma}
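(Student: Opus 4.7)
The plan is to lift $L_i$ from $\R^n$ back to the hyperplane $H_{\cB(G)} \subset \R^{n+1}$: a point on $L_i$ has coordinates $x_j = a$ for $j \in [n] \setminus \{i\}$, $x_i \in [1, k_{n+1}]$, and $x_{n+1} = k_{n+1} + 1 - x_i$. First I would verify $a \in [1, k_{n+1}]$; this gives $(a,\dots,a) \in L_i$ for every $i$, and since each $L_i$ keeps $x_j = a$ fixed for $j \ne i$, any two distinct segments meet exactly at $(a,\dots,a)$, settling the intersection claim. The bound $a \leq k_{n+1}$ rearranges to $n k_{n+1} \geq |\cB| - 1$, which is Lemma~\ref{lem:inequality}; the bound $a \geq 1$ reduces to $|\cB(G|_{[n]})| \geq 2n - 1$, which holds by Lemma~\ref{lem:parallel_facets} because a connected graph on $n \geq 2$ vertices contributes at least $n$ singletons and $n-1$ spanning-tree edges to its building set (the case $n = 1$ is trivial).

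For the containment $L_i \subset P_G$ I plug this parametrization into each facet inequality $\sum_{l \in J} x_l \geq |\cB|_J|$ with $J \in \cB$, $J \ne [n+1]$. The left-hand side is affine in $x_i$ with slope $\mathbf{1}_{i \in J} - \mathbf{1}_{n+1 \in J} \in \{-1,0,+1\}$, so its minimum on $[1,k_{n+1}]$ occurs at an endpoint. Computing the three possible endpoint values $|J|\,a$, $1 + (|J|-1)\,a$, and $k_{n+1} + 1 + (|J|-2)\,a$ and using $1 \leq a \leq k_{n+1}$ to eliminate the first and third as dominated, one finds that $1+(|J|-1)a$ is always the binding value. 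Hence every facet inequality on every $L_i$ reduces to the single key bound
\[
    |\cB|_J| \leq 1 + (|J|-1)\,a \qquad (J \in \cB,\ J \ne [n+1]).
\]

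Writing $d(J) := (|\cB|_J|-1)/(|J|-1)$, this says $d(J) \leq a = d([n])$. I would prove it in two steps. First, \emph{strict density monotonicity}: if $G|_{J'}$ and $G|_J$ are both connected with $J = J' \cup \{v\}$, $v \notin J'$, then $d(J) > d(J')$; indeed a direct calculation shows $d(J) - d(J')$ has the same sign as $k_v(G|_J) - d(J')$, Lemma~\ref{lem:inequality} applied to $G|_{J'}$ gives $k_w(G|_{J'}) \geq d(J')$ for any neighbor $w \in J'$ of $v$, and the injection $S \mapsto S \cup \{v\}$ from connected induced subgraphs of $G|_{J'}$ containing $w$ into those of $G|_J$ containing $v$ (with image missing $\{v\}$) yields $k_v(G|_J) \geq k_w(G|_{J'}) + 1 > d(J')$. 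Second, \emph{enlargement}: I construct $J \subset J^* \subsetneq [n+1]$ with $J^* \in \cB$, $|J^*| = n$, and $d(J^*) \leq a$. If $J \subset [n]$, take $J^* = [n]$ (in $\cB$ by Lemma~\ref{lem:parallel_facets}, with $d([n]) = a$). If $n+1 \in J$, pick a non-cut vertex $j$ of $G$ lying in $[n] \setminus J$ and set $J^* = [n+1]\setminus\{j\}$; then $d(J^*) = (|\cB| - k_j - 1)/(n-1) \leq a$ by the minimality of $k_{n+1}$. Chaining $J$ up to $J^*$ by one-vertex extensions inside $G|_{J^*}$ and iterating monotonicity yields $d(J) \leq d(J^*) \leq a$.

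The main obstacle is producing the non-cut vertex $j$ when $n+1 \in J$. I would argue via the block--cut tree of $G$: if a connected $J \subsetneq [n+1]$ contained every non-cut vertex of $G$, then for any $v \in [n+1] \setminus J$ (necessarily a cut vertex, hence a non-leaf of the block--cut tree) the connected set $J$ would lie in one component of $G - v$; but each subtree of the block--cut tree obtained by deleting $v$ still contains a leaf of the original tree, i.e.\ a leaf-block of $G$, and hence a non-cut vertex of $G$, forcing a non-cut vertex outside $J$, a contradiction. Since $n+1 \in J$ in this case, the non-cut vertex so produced automatically lies in $[n] \setminus J$, closing the argument.
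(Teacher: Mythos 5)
Your proposal is correct and follows essentially the same route as the paper: reduce every facet inequality along $L_i$ to the density bound $(|\cB|_J|-1)/(|J|-1) \leq a$, show this density is monotone under one-vertex connected extensions using Lemma~\ref{lem:inequality}, enlarge to a set omitting a single vertex and invoke the minimality of $k_{n+1}$, with $1 \leq a \leq k_{n+1}$ also coming from Lemma~\ref{lem:inequality}. The only real deviations are cosmetic: the block--cut-tree argument for a non-cut vertex outside $J$ is unnecessary (the paper simply grows $J$ one neighboring vertex at a time up to $[n+1]$ and takes the penultimate set, which is automatically a co-singleton element of $\cB$), and $a \geq 1$ is equivalent to $|\cB|_{[n]}| \geq n$ rather than $2n-1$, though the stronger bound you prove certainly suffices.
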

\begin{proof}
    Set $\cB := \cB(G)$ for brevity.
	For $(x_1, \ldots, x_{n}) \in L_i$ and $I (\neq [n+1]) \in \cB$, we will show $\sum_{i \in I} x_i \geq | \cB|_I|$ if $n+1 \not\in I$, and $\sum_{i \not\in I} x_i \leq |\cB| - |\cB|_I|$ if $n+1 \in I$.
    It is enough to check the following inequalities:
\begin{alignat*}{2}
    1 + (|I| - 1)a &\geq | \cB|_I|  \quad\quad && \text{ if $n+1 \not\in I$, and} \\
    k_{n+1} + (n - |I|)a &\leq |\cB| - |\cB|_I|, \quad\quad && \text{ if $n+1 \in I$.}
\end{alignat*}
    This is obvious when $|I| = 1$ and both inequalities are equivalent to the inequality
	\begin{equation}\label{eq:increasing}
		\frac{|\cB| - k_{n+1} - 1}{n-1} \geq \frac{|\cB|_I| - 1}{|I| - 1}
	\end{equation}
	when $|I|>1$.

    We define a function $f \colon \cB \setminus \{ \{1\}, \ldots, \{n+1\}\} \to \Q$ by $f(I) = \frac{|\cB|_I| -1}{|I|-1}$.
    Suppose there exists $J \in \cB$ such that $J = I \cup \{m\}$ for some $m \in [n+1] \setminus I$.
    Then Lemma~\ref{lem:inequality} applied for~$G|_J$ and the equality $|\cB|_J| = |\cB|_I| + k_m(G|_J)$ show that
	\[
		k_m(G|_J) \geq \frac{|\cB|_I| - 1}{|I| - 1}.
	\]
	This implies
	\begin{equation}\label{eq:nested_increasing}
		f(J) = \frac{|\cB|_J|-1}{|J| - 1} = \frac{|\cB|_I| + k_m(G|_J) - 1}{|I|} \geq \frac{|\cB|_I| - 1}{|I| - 1} = f(I).
	\end{equation}
	Therefore, $f$ is an increasing function on the size of the nested index sets.

    Since $\cB$ is a building set constructed from a connected simple graph, for any $I \neq [n+1]$ in~$\cB$, there is $J \in \cB$ such that $I \subset J$ and $[n+1] \setminus J = \{j\}$ for some~$j$.
    By \eqref{eq:nested_increasing} and the minimality of~$k_{n+1}$, we have
    $$
        f(I) \leq f(J) = \frac{ |\cB|_J| -1 }{|J|-1} =\frac{ |\cB| - k_j -1 }{n-1} \leq \frac{ |\cB| - k_{n+1} -1 }{n-1},
    $$ which proves the inequality~\eqref{eq:increasing}, hence the first statement.
	
	It remains to show that $1 \leq a \leq k_{n+1}$. The first inequality is clear from the definition and the second inequality follows again from Lemma~\ref{lem:inequality}.
\end{proof}

\begin{proof}[Proof of Theorem~\ref{thm:restate_main}]
The line segments~$L_i$ from Lemma~\ref{lem:lower_bound} have affine length $k_{n+1} - 1$ for all $i=1, \dots, n$. They intersect at the point $(a, a, \dots, a)$ and the primitive vectors parallel to $L_i$'s are standard basis vectors.
Therefore, by Proposition~\ref{prop:lower_bound}, we have $w_G(M_G, \omega_G) \geq k_{n+1}-1$.

On the other hand, by Lemma~\ref{lem:parallel_facets}, both $\{n+1\}$ and $[n]$ are in $\cB:=\cB(G)$. These correspond to the hyperplanes $x_{n+1} \geq 1$ and $x_{n+1} \leq |\cB| - |\cB|_{[n]}|$, respectively.
Since any element in~$\cB$ either contains the vertex~$n+1$ or is contained in the set~$[n]$, the latter hyperplane is $x_{n+1} \leq k_{n+1}$.
Recall that the Delzant polytope~$P_G$ is obtained from~$P_{\cB(G)}$ by forgetting the last coordinate. Hence, we have two hyperplanes $\{ x \in \R^n \mid \sum_{i=1}^n x_i \leq |\cB| - 1 \}$ and $\{ x \in \R^n \mid \sum_{i=1}^n x_i \geq |\cB| - k_{n+1} \}$ supporting~$P_G$. By Lemma~\ref{lem:edge_property}, any primitive vector parallel to an edge of $P_G$ must be of the form $\pm e_j$, or $e_j - e_k$ for some $j \neq k$. We apply Proposition~\ref{prop:key} for $u = (1, \dots, 1)$ and obtain $w_G(M_G, \omega_G) \leq k_{n+1}-1$ as desired.
\end{proof}

\begin{proof}[Proof of Corollary~\ref{cor:embedding}]
By the proof of Theorem~\ref{thm:restate_main}, the Gromov width of~$M_G$ is given by $K_{\max} - K_{\min}$ for some moment map $K \colon M_G \to \mathbb{R}$ given by a semifree $S^1$-action. The inequality $w_G(M_G \times \mathbb{R}^{2m}) \leq  w_G(M_G)$ follows from Corollary~\ref{cor:stabilization}. The obvious embedding $B^{2n+2m}(r) \hookrightarrow B^{2n}(r) \times \mathbb{R}^{2m}$ shows that this is actually an equality. Similarly, $w_G(M_H \times \mathbb{R}^{2k+2m}) = w_G(M_H)$. Now the corollary follows from Corollary~\ref{cor:subgraph}.
\end{proof}

\subsection{General case}
Let $G_1, \dots, G_s$ be the connected components of the graph~$G$. By the definition of~$P_{\cB(G)}$ as the Minkowski sum~\eqref{eq:Minkowski_sum}, we have
\[
	P_{\cB(G)} = \prod_{j=1}^s P_{\cB(G_j)} \subset \prod_{j=1}^s \mathbb{R}^{|G_j|} = \mathbb{R}^{n+1}.
\]
Note that the Minkowski sum with a point is just a translation and does not affect the shape of the polytope. So we will ignore the components consisting of one point and assume that $n_j:= |G_j| -1 \geq 1$ for all $j=1, \dots, s$. Now $(M_G, \omega_G)$ is the product of the symplectic manifolds $(M_{G_j}, \omega_{G_j})$. The vertices of~$G$ will be labeled using two indices~$i$ and~$j$, and we let $k_i^j$ denote the number of connected induced subgraphs containing such vertex. As in the connected case, we assume that the minimum among $k_1^j, \dots, k_{n_j+1}^j$ is attained when $i=n_j+1$ for each $j=1, \dots, s$.

\begin{proof}[Proof of Theorem~\ref{thm:main}]
	By Theorem~\ref{thm:restate_main}, the Gromov width of~$M_{G_j}$ is~$k_{n_j+1}^j$ for $j=1, \dots, s$. If $k_{n_m+1}^m$ is minimal among them, consider the $S^1$-action on~$M_{G_m}$ whose moment map~$H$ satisfies $H_{\max} - H_{\min} = k_{n_m+1}^m - 1$. As in the proof of Corollary~\ref{cor:stabilization}, we extend this action to~$M_G$ by acting trivially on the other factors. Then $w_G(M_G, \omega_G) \leq k_{n_m+1}^m - 1$ follows from Lemma~\ref{lem:key}.
	
	The other direction follows from the inclusion
	\[
		B^{2(n-s+1)}(r) \hookrightarrow \prod_{j=1}^s B^{2n_j}(r) \hookrightarrow \prod_{j=1}^s M_{G_j},
	\]
	where $\pi r^2 = k_{n_m+1}^m -1$.
\end{proof}

\begin{remark}
	When $G=K_{n+1}$, the corresponding polytope $P_{\cB(K_{n+1})}$ is a permutohedron whose vertices are obtained by all permutations of the coordinates of the point $(2^0, 2^1, \ldots, 2^n) \in \R^{n+1}$. By Theorem~\ref{thm:main}, we have $w_G(M_{K_{n+1}}, \omega_{K_{n+1}}) = 2^n - 2^0$.

	More generally, we can think of a permutohedron $P$ whose vertices are obtained by all permutations of the coordinates of the point
    $$
        (c_1, \ldots, c_{n+1}) \in~\R^{n+1} \quad \text{ with } c_1 < c_2 < \ldots < c_{n+1}.$$
    As before, we consider $P$ as a Delzant polytope in~$\R^n$ by forgetting the last coordinate. The corresponding symplectic toric manifold $(M, \omega)$ has the Gromov width given by
	\begin{equation}
		w_G (M, \omega) = c_{n+1} - c_1.
	\end{equation}
	The upper bound is obtained from two supporting hyperplanes $x_1 \geq c_1$ and $x_1 \leq c_{n+1}$. For the lower bound, we may take $L_i$ to be the line segment whose $i$th coordinate varies from $c_1$ to $c_{n+1}$ and all the other coordinates are $\sum_{i=2}^n c_i / (n-1)$.
\end{remark}

\begin{remark}
    It would be interesting to compute the Gromov width of the symplectic toric manifolds corresponding to general nestohedra.
    It is reasonable to define $k_i$ to be the number of elements in the building set containing $i$ and ask whether Therem~\ref{thm:main} still holds.
    However, Theorem~\ref{thm:main} does not hold for general nestohedra.
    For instance, let
	\[
		\cB:= \{ \{1\}, \{2\}, \{3\}, \{4\}, \{1,2\}, \{3,4\}, \{1,2,3,4\}\}
	\]
	be a building set on~$\{1,2,3,4\}$. Then the defining inequalities are given by
	\[
		x_i \geq 1 \text{ (for } 1 \leq i \leq 3), \quad x_1 + x_2 + x_3 \leq 6, \quad 3 \leq x_1 + x_2 \leq 4.
	\]
	Note that $x_1 + x_2 = 3$ and $x_1 + x_2 = 4$ are parallel hyperplanes supporting~$P_{\cB}$. The Gromov width is at most~$1$ by Proposition~\ref{prop:key} and Lemma~\ref{lem:edge_property}, while the minimum of $k_i - 1$ is~$2$.
\end{remark}

\section*{acknowledgement}
The main problem of this paper was motivated by the online seminar series organized by Prof. Dong Youp Suh and Dr. Jongbaek Song in 2020. The authors thank the organizers.

\bigskip
\bibliographystyle{amsplain}

\providecommand{\bysame}{\leavevmode\hbox to3em{\hrulefill}\thinspace}
\providecommand{\MR}{\relax\ifhmode\unskip\space\fi MR }
\providecommand{\MRhref}[2]{%
  \href{http://www.ams.org/mathscinet-getitem?mr=#1}{#2}
}
\providecommand{\href}[2]{#2}

\end{document}